\newcounter{casenum}
\newcommand\mycom[2]{\genfrac{}{}{0pt}{}{#1}{#2}}
\def\re{{\Re e\,}}
\def\im{{\Im m\,}}
\def\dsum{\displaystyle \sum}
\theoremstyle{plain}
\newtheorem{thm}{Theorem}[section]
\newtheorem{lem}{Lemma}[section]
\newtheorem{pro}{Proposition}[section]
\theoremstyle{definition}
\newtheorem{hyp}{Hypothesis}[section]
\newtheorem{rem}{Remark}[section]
\numberwithin{equation}{section}
\title{Oscillatory behavior and equidistribution of signs of Fourier coefficients of cusp forms}
 \author{Mohammed Amin Amri \footnote{ACSA Laboratory, Department of Mathematics, Faculty of Sciences, Mohammed First University, Oujda, Morocco amri.amine.mohammed@gmail.com} }
 \date{October 17, 2017}
\begin{document}

\maketitle
%\linenumbers 
\begin{abstract}
In this paper, we discuss questions related to the oscillatory behavior and the equidistribution of signs for certain subfamilies of Fourier coefficients of integral weight newforms with a non-trivial nebentypus as well as Fourier coefficients of eigenforms of half-integral weight reachable by the Shimura correspondence.
\end{abstract}

\section{Introduction}
Let $f$ be a cusp form of positive real weight with multiplier system, and let $a(n)$ be its $n$-th Fourier coefficient. In \cite{KKP,Pr} Knopp, Kohnen and Pribitkin, proved that the sequence $\{a(n)\}_{n\in\mathbb{N}}$, is oscillatory i.e., for each real number $\phi\in [0,\pi)$, either the sequence $\{\re (a(n)e^{-i\phi})\}_{n\in\mathbb{N}}$ changes sign infinitely often or is trivial. Geometrically speaking, this means that no matter how we slice the plane with a straight line going through the origin, there will always be infinitely many terms of $\{a(n)\}_{n\in\mathbb{N}}$ on either side of the line, unless all the terms are on the line itself, this fact motivates the following questions:
\begin{itemize}
\item[$\bullet$] What is the proportion of integers for which the $a(n)$ lies in the same half-plane? 
\item[$\bullet$] If all the $a(n)$ are on a line, what is the proportion of integers for which the $a(n)$ lies in either side of the origin?
\end{itemize}

The latter question was asked in the particular case when $a(n)$ are real in \cite[Section 6]{KKP}. In the case when $f$ is a newform of integral weight without complex multiplication (CM), the celebrate Sato-Tate conjecture suggests that no matter how we slice the plane with a straight line going through the origin, the proportion of primes for which the $a(p)$ lies in the same half-plane is equal to the half of the proportion of primes for which the $a(p)$ are not on the line. The questions that then naturally arise are: whether this is still true for newforms with CM? Can we infer similar results for $a(n)$ when $n$ runs through natural integers? Numerical calculations seem to suggest that the answer is positive. 

Going further in this direction, in the case when $f$ is a cusp form of half-integral weight with real Fourier coefficients contained in the plus space, Kohnen and Bruinier \cite{bruinier} gave the conjecture  
$$ 
\lim_{x\to\infty}\dfrac{\#\{n\le x\; :\; a(n)\gtrless 0\}}{\#\{n\le x \; :\; a(n)\neq 0\}}=\frac{1}{2}.
$$
In \cite{IW2} Inam and Wiese partially verified this conjecture, more precisely they proved that for a fixed square-free integer $t$, the proportion of integers from $\{tn^2\}_{n\in\mathbb{N}}$ on which the $a(tn^2)$ are of the same sign is equal to the half of the proportion of integers from $\{tn^2\}_{n\in\mathbb{N}}$ on which the $a(tn^2)$ are non-zero.

This work was intended as an attempt to answer the questions mentioned above. However, it seems quite difficult to prove any general theorem here and we can only prove results that seem to point into the right direction. One of the motivations of this paper is an earlier work \cite{Amri} of the author, in which he proved that the Fourier coefficients of a newform supported on prime powers have infinitely many ``angular changes". Moreover, he established the ``angular changes" of some subfamilies of Fourier coefficients of holomorphic cusp forms of half-integral weight reachable via the Shimura correspondence, using a robust analytic tool.

The outline of the paper is as follows. In Section \ref{sec:1} we prove a Sato-Tate theorem for CM newforms with non-trivial nebentypus, which is  presumably well-known to experts. But, it seems that this result has not previously appeared in the literature. We also recall the Sato-Tate theorem for newforms without CM due to its importance in the sequel. In Section \ref{sec:2}, we shall prove that for a given newform, the sub-sequences of its Fourier coefficients $\{a(p^\nu)\}_{p,\text{primes}}$ (for a fixed $\nu\in\mathbb{N}$), and $\{a(p^\nu)\}_{\nu\in\mathbb{N}}$ (for a fixed prime $p$) are oscillatory. Moreover, we calculate the proportion of primes and integers, respectively from $\{p^\nu\}_{p,\text{primes}}$, and $\{p^\nu\}_{\nu\in\mathbb{N}}$, on which $a(p^\nu)$ lies in the same half-plane. In Section \ref{sec:3} we study the oscillatory behavior results for the Fourier coefficients of eigenforms of half-integral weight which are accessible via the Shimura correspondence. Indeed, we prove that the sequences  $\{a(tp^2)\}_{p,\text{primes}}$ and $\{a(tp^{2\nu})\}_{\nu\in\mathbb{N}}$, are oscillatory. Furthermore, we calculate the proportion of primes from $\{tp^2\}_{p,\text{primes}}$ on which the $a(tp^2)$ lies in the same half-plane. Some conclusions are drawn in the final section.

\paragraph{\textbf{Notations}}
Throughout the paper, for any $k\ge 2$, $N\ge 1$ and any Dirichlet character $\varepsilon\pmod N$, we
denote by $r_\varepsilon$ the order of $\varepsilon$. We write $S_k^{\mathrm{new}}(N,\varepsilon)$ for the space of newforms of weight $k$ for the group $\Gamma_0(N)$, with nebentypus $\varepsilon$. If $4\mid N$ we write $S_{k+1/2}(N,\varepsilon)$ for the space of half-integral weight cusp forms, when $k=1$, we shall work only with the orthogonal complement (with respect to the Petersson inner product) of the subspace of $S_{k+1/2}(N,\varepsilon)$ spanned by single-variable unary theta functions. The  letter $\mathcal{H}$ stands for the upper half-plane, for $z\in\mathcal{H}$ we set $q:=e^{2\pi iz}$.

Let $\mathbb{P}$ denote the set of all prime numbers. If $S$ is a subset of $\mathbb{P}$,  we denote by  $\delta(S)$ its natural density (if it exists), and we shall denote by $\pi(x)$ the prime-counting function.

\section{Preliminaries}\label{sec:1}
In this section, we mention some results about the equidistribution of eigenvalues of newforms with non-trivial nebentypus, which are crucial for our purpose. In the CM-case (newforms with CM), we provide a full proof, as we are not aware of any appropriate reference.

Let  $f(z)=\sum_{n\ge1}a(n)q^n\in S_k^{\text{new}}(N,\varepsilon)$ be a normalized newform. Fixing a root of unity $\zeta$ such that $\zeta^2\in\text{Im}(\varepsilon)$, let $p$ be a prime number satisfying $\varepsilon(p)=\zeta^2$, it is clear that $\frac{a(p)}{\zeta}$ is real, hence in view of the Ramanujan-Petersson bound, we have 
\begin{equation}
\frac{a(p)}{2 p^{(k-1)/2}\zeta}\in[-1,1].\label{eq1}
\end{equation} 

The distribution of the sequence $\left(\frac{a(p)}{2 p^{(k-1)/2}\zeta}\right)_p$ on $[-1,1]$ as $p$ varies over primes such that $\varepsilon(p)=\zeta^2$, depends on whether $f$ has complex multiplication (of CM-type) or not (not of CM-type), then we have to consider two cases. Let us first look at the CM-case. We shall prove.
\begin{thm}\label{thmCMST}
Let $f\in S_k^{\mathrm{new}}(N,\varepsilon)$ be a normalized newform of CM-type, write
$$
f(z)=\sum_{n\ge1}a(n)q^n\quad z\in\mathcal{H},
$$
for its Fourier expansion at $\infty$. Assume that the order of $\varepsilon$ and the discriminant of the imaginary quadratic field by which $f$ has complex multiplication are coprime. Let $\zeta$ be a root of unity such that $\zeta^2\in\mathrm{Im}(\varepsilon)$. Then the sequence $\left(\frac{a(p)}{2p^{(k-1)/2}\zeta}\right)_p$ is equidistributed in $[-1,1]$ as $p$ varies over primes satisfying $\varepsilon(p)=\zeta^2,$ with respect to the measure $\mu_{\text{CM}}:=\frac{1}{2\pi}\frac{dt}{\sqrt{1-t^2}}+\frac{1}{2}\delta_{0}$, where $\delta_0$ denotes the Dirac measure concentrated at zero. In particular, for any sub-interval $I\subset [-1,1]$, we have 
$$
\lim_{x\to\infty}\dfrac{\#\{p\le x: \varepsilon(p)=\zeta^2,\;\frac{a(p)}{2p^{(k-1)/2}\zeta}\in I\}}{\#\{p\le x\;:\; \varepsilon(p)=\zeta^2\}}=\mu_{\text{CM}}(I)=\dfrac{1}{2\pi}\int_{I}\dfrac{dt}{\sqrt{1-t^2}}+\dfrac{1}{2}\delta_{0}(I).
$$
\end{thm}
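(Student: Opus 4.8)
The plan is to use the structure theory of CM newforms to reduce the statement to Hecke's equidistribution theorem for Gr\"ossencharacters, supplemented by an elementary density computation. Since $f$ is of CM-type, there are an imaginary quadratic field $K$ of discriminant $d_K$, with quadratic character $\chi_K$, and a Hecke Gr\"ossencharacter $\psi$ of $K$ of infinity type $z\mapsto z^{k-1}$, such that $a(p)=0$ for every prime $p$ inert in $K$, while for a prime $p=\mathfrak{p}\bar{\mathfrak{p}}$ that splits in $K$ one has
\[
a(p)=\psi(\mathfrak{p})+\psi(\bar{\mathfrak{p}}),\qquad |\psi(\mathfrak{p})|=p^{(k-1)/2},\qquad \psi(\mathfrak{p})\psi(\bar{\mathfrak{p}})=\varepsilon(p)\,p^{k-1};
\]
moreover $\varepsilon=\chi_K\cdot\chi_\psi$, where $\chi_\psi$ is the Dirichlet character determined by $\psi((\ell))=\chi_\psi(\ell)\,\ell^{k-1}$ on rational primes $\ell$. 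Consequently, if $p$ splits in $K$ and $\varepsilon(p)=\zeta^2$, then writing $\psi(\mathfrak{p})=p^{(k-1)/2}\zeta e^{i\alpha_{\mathfrak{p}}}$ with $\alpha_{\mathfrak{p}}\in\mathbb{R}$ (possible since $|\psi(\mathfrak{p})/\zeta|=p^{(k-1)/2}$), the identity $\psi(\mathfrak{p})\psi(\bar{\mathfrak{p}})=\zeta^2p^{k-1}$ forces $\psi(\bar{\mathfrak{p}})=p^{(k-1)/2}\zeta e^{-i\alpha_{\mathfrak{p}}}$, whence $a(p)/(2p^{(k-1)/2}\zeta)=\cos\alpha_{\mathfrak{p}}$; for $p$ inert with $\varepsilon(p)=\zeta^2$ this ratio is $0$.

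Next I would analyse the counting set $\{p\le x:\varepsilon(p)=\zeta^2\}$ by splitting it according to whether $p$ splits or is inert in $K$, and show that each of the two subsets has natural density $\tfrac1{2r_\varepsilon}$, i.e.\ exactly half of $\delta(\{p:\varepsilon(p)=\zeta^2\})=\tfrac1{r_\varepsilon}$. It is here that the hypothesis $\gcd(r_\varepsilon,d_K)=1$ enters: it is imposed precisely to guarantee that $\chi_K$ is not a power of $\varepsilon$ (equivalently, $\sqrt{d_K}$ does not lie in the abelian field cut out by $\varepsilon$), so that the Dirichlet characters $\chi_K^a\varepsilon^b$ with $(a,b)\neq(0,0)$ are all nontrivial; detecting $\mathbf{1}[\varepsilon(p)=\zeta^2]$ by orthogonality and invoking the prime number theorem for Dirichlet $L$-functions then yields $\sum_{p\le x}\chi_K(p)\,\mathbf{1}[\varepsilon(p)=\zeta^2]=o(\pi(x))$, which is equivalent to the asserted equality of the two densities. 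The inert primes, on which $a(p)=0$, thus contribute precisely the atom $\tfrac12\delta_0$.

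The technical core is to prove that, as $\mathfrak{p}$ runs over the degree-one prime ideals of $K$ with $\varepsilon(N\mathfrak{p})=\zeta^2$ (ordered by norm; the degree-two primes number $O(\sqrt{x}/\log x)$ and are negligible), the circle elements $e^{i\alpha_{\mathfrak{p}}}=\zeta^{-1}\psi(\mathfrak{p})/(N\mathfrak{p})^{(k-1)/2}$ equidistribute with respect to the normalised Haar measure on the unit circle. By Weyl's criterion this reduces to showing, for each nonzero integer $m$, that $\sum_{N\mathfrak{p}\le x,\ \varepsilon(N\mathfrak{p})=\zeta^2}\big(\psi^{u}(\mathfrak{p})\big)^{m}=o(x/\log x)$, where $\psi^{u}=\psi\,N^{-(k-1)/2}$ is the unitarisation. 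Writing the congruence condition $\varepsilon(N\mathfrak{p})=\zeta^2$ as a finite linear combination of values $\eta(\mathfrak{p})$ of finite-order Hecke characters $\eta$ of $K$, this becomes a bound for $\sum_{N\mathfrak{p}\le x}\big(\eta\,(\psi^{u})^{m}\big)(\mathfrak{p})$. Since $m\neq 0$ and $k\geq 2$, the character $(\psi^{u})^{m}$ has nontrivial infinity type, so $\eta\,(\psi^{u})^{m}$ is a nontrivial Hecke character; its $L$-function is holomorphic and nonvanishing on $\re(s)=1$, and the prime ideal theorem gives the desired $o(x/\log x)$ bound against the main term $\asymp x/\log x$ for the number of $\mathfrak{p}$ in the congruence class. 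Pushing the Haar measure forward under $z\mapsto\re(z)$ transports this into the equidistribution of $\cos\alpha_{\mathfrak{p}}$ with respect to the arcsine law $\tfrac1\pi\,dt/\sqrt{1-t^2}$ on $[-1,1]$.

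Finally I would assemble the pieces. For a sub-interval $I\subseteq[-1,1]$ with $\mu_{\mathrm{CM}}(\partial I)=0$, the number of $p\le x$ with $\varepsilon(p)=\zeta^2$ and $a(p)/(2p^{(k-1)/2}\zeta)\in I$ equals the number of inert such $p$ (counted only when $0\in I$) plus the number of split such $p$ with $\cos\alpha_{\mathfrak{p}}\in I$; dividing by $\#\{p\le x:\varepsilon(p)=\zeta^2\}$ and letting $x\to\infty$, the second step contributes $\tfrac12\delta_0(I)$ and the third contributes $\tfrac12\cdot\tfrac1\pi\int_I dt/\sqrt{1-t^2}=\tfrac1{2\pi}\int_I dt/\sqrt{1-t^2}$, giving $\mu_{\mathrm{CM}}(I)$. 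I expect the main obstacle to be the third step — the equidistribution of the Hecke angles within a fixed congruence class — which hinges on the nonvanishing on the line $\re(s)=1$ of $L$-functions of infinite-order Gr\"ossencharacters twisted by finite-order characters, together, secondarily, with making fully rigorous the use of the coprimality hypothesis in the second step.
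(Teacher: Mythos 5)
Your proof is correct and, at bottom, it is the same argument as the paper's, only with the abstraction level turned down. You split the primes with $\varepsilon(p)=\zeta^2$ into split and inert in the CM field, observe $a(p)=0$ on the inert half (giving the atom $\tfrac12\delta_0$), show via orthogonality of characters and the coprimality hypothesis $(r_\varepsilon,d_F)=1$ that the two halves have equal density, and then equidistribute the unitarised Grössencharacter angles within a fixed residue class by Weyl's criterion together with the nonvanishing of Hecke $L$-functions of nontrivial (infinite-order) Hecke characters on $\re(s)=1$; pushing forward under $\cos$ gives the arcsine component. The paper proceeds identically after introducing the Sato--Tate torus $G$ and its conjugacy classes $X$, invoking Proposition~\ref{pro1} (proved à la Fit\'e--Sutherland, which under the hood is exactly the Weyl/Hecke argument you give) to equidistribute the $x_\mathfrak{p}$, and then mapping $X\to[-1,1]$ by trace$/2\zeta$; it also uses Chebotarev plus the linear disjointness of $F$ and $\mathbb{Q}(\varepsilon)$ for the split/inert balance, which is equivalent to your PNT-for-Dirichlet-characters formulation. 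The one step you flag as the main obstacle --- nonvanishing of the twisted Grössencharacter $L$-functions on $\re(s)=1$ --- is genuinely the technical core and is classical (Hecke); you handle it correctly by noting that $\eta\cdot(\psi^u)^m$ has nontrivial infinity type for $m\neq0$ and $k\geq 2$. In short, the two proofs buy the same theorem; the paper's Sato--Tate packaging makes the connection to the general Sato--Tate framework explicit, while your version is more self-contained and elementary.
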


Notice that if $\varepsilon$ is trivial we find Deuring's equidistribution theorem \cite{deuring}. Before we proceed to prove the theorem, we first recall some basic facts concerning newforms with complex multiplication, following the exposition given in \cite[Section 2, pp.8-9]{Fite2015}.

Assume that $f$ is of CM-type, by definition (cf. \cite[Definition, pp. 34]{Ribet}) there exists a Dirichlet character $\chi$ such that 
\begin{equation}\label{eq2}
a(p)=\chi(p)a(p),
\end{equation}
for a set of primes $p$ of density $1$. We see that $\chi$ here has to be a quadratic character and then it corresponds to a quadratic imaginary field say $F$, let $d_F$ denote its discriminant. 

According to \cite[Corollary 3.5, Theorem 4.5]{Ribet} the newform $f$ should arise from an algebraic Hecke character $\xi_f$ of $F$ of modulus $\mathfrak{m}$ (integral ideal of $F$), in the sense that the Fourier expansion of $f$ at $\infty$ can be written as
$$
f(z)=\sum_{\mathfrak{a}}\xi_f(\mathfrak{a})q^{\mathcal{N}(\mathfrak{a})},
$$
where $\mathfrak{a}$ runs through all integral ideals of $F$ and $\mathcal{N}=\mathcal{N}_{F/\mathbb{Q}}$ is the norm relative to the extension $F/\mathbb{Q}$. 

Let $K_f$ denote the number field obtained by adjoining to $\mathbb{Q}$ the Fourier coefficients of $f$.  Let $\ell$ be prime.  For each prime ideal $\lambda$  of $K_f$ lying above $\ell$, there is an irreducible 2-dimensional Galois representation
$$
\rho_{f,\lambda} : \mathrm{Gal}(\overline{\mathbb{Q}}/\mathbb{Q})\rightarrow \mathrm{GL}_2(K_{f,\lambda})
$$
which is unramified outside $N\ell$, and satisfies
\begin{equation}\label{eq3}
\det(\rho_{f,\lambda})=\varepsilon\chi_{\ell}^{k-1},
\end{equation}
where $K_{f,\lambda}$ is the completion of $K_{f}$ at $\lambda$ and $\chi_{\ell}$ is the $\ell$-adic cyclotomic character. 

Let $\mathfrak{p}$ be a prime of $F$ lying above a prime $p$ not dividing $N\ell$, and let $\overline{\mathfrak{p}}$ be its conjugate, then $\rho_{f,\lambda}(\mathrm{Frob}_{\mathfrak{p}})$ has a characteristic polynomial 
$$
P_{\mathfrak{p},\rho_{f,\lambda}}(T):=T^2-a(p)T+p^{k-1}\varepsilon(p),
$$ 
which can be factored into
\begin{equation}\label{eq4}
P_{\mathfrak{p},\rho_{f,\lambda}}(T)=(T-\xi_f(\mathfrak{p}))(T-\xi_f(\mathfrak{\overline{p}})).
\end{equation}
Therefore, in view of \eqref{eq3} we have

\begin{equation}\label{eq5}
\xi_f(\mathfrak{\overline{p}})=\varepsilon(\mathcal{N}(\mathfrak{p}))\overline{\xi_f(\mathfrak{p})}.
\end{equation}

Next, consider the commutative group
$$
G=\left\{\left(\begin{array}{cc}
u & 0 \\ 
0 & \zeta \bar{u}
\end{array}\right)\; | \; \zeta\in\text{Im}(\varepsilon), \; u\in\mathbb{C}^*\;\;|u|=1 \right\}.$$
Let $\mu$ denote the Haar measure of $G$ and $X:=\mathrm{conj}(G)$ the set of
its conjugacy classes. Let $S$ be the set of primes of $F$ lying over $N\ell.$ For every
$\mathfrak{p}\notin S$, define the sequence
$$x_{\mathfrak{p}}:=\left(\begin{array}{cc}
\xi_f(\mathfrak{p})/\mathcal{N}(\mathfrak{p})^{(k-1)/2} & 0 \\ 
0 & \varepsilon(\mathcal{N}(\mathfrak{p}))\overline{\xi_f(\mathfrak{p})}/\mathcal{N}(\mathfrak{p})^{(k-1)/2}
\end{array}\right)\in X,$$
with $\mathfrak{p}$ runs over prime ideals of $F$ such that $\text{Frob}_{\mathfrak{p}}=\mathcal{C}$, where $\mathcal{C}$ is a certain conjugacy class of $\mathrm{Gal}(F_{\varepsilon}/F)$, and $\mathrm{Frob}_{\mathfrak{p}}$ denotes the Frobenius element at $\mathfrak{p}$ in the cyclic extension $F_{\varepsilon}/F$, where $F_{\varepsilon}$ denotes the compositum of the field $\mathbb{Q}(\varepsilon)$ generated by the values of the character $\varepsilon$ and $F$. 

At this point we state the following proposition which can be proved by a similar argument to the one in \cite[Proof of Proposition 3.6]{Fite2014}.
\begin{pro}\label{pro1}
For any conjugacy class $\mathcal{C}$ of $\mathrm{Gal}(F_{\varepsilon}/F)$  the sequence $\{x_{\mathfrak{p}}\}_{\mathfrak{p}}$ is $\mu$-equidistributed on $X$.
\end{pro}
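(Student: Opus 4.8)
\emph{Sketch of proof.} The plan is to run the standard Sato--Tate/Chebotarev argument, exactly as in \cite[Proof of Proposition 3.6]{Fite2014}, adapted to the group $G$ above. Since $G$ is commutative, every conjugacy class is a single element, so $X=\mathrm{conj}(G)$ may be identified with $G$ and $\mu$ with its Haar measure; the assignment $\mathrm{diag}(u,\zeta\bar u)\leftrightarrow(u,\zeta)$ identifies $G$ with $U(1)\times\mathrm{Im}(\varepsilon)$, whose Pontryagin dual is $\widehat G=\{\chi_{a,b}\,:\,a\in\mathbb Z,\ b\in\mathbb Z/r_\varepsilon\mathbb Z\}$ with $\chi_{a,b}(\mathrm{diag}(u,\zeta\bar u))=u^{a}\zeta^{b}$. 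By the Weyl equidistribution criterion, the sequence $\{x_{\mathfrak p}\}_{\mathfrak p}$ (ordered by $\mathcal N(\mathfrak p)$, with $\mathfrak p\notin S$ and $\mathrm{Frob}_{\mathfrak p}=\mathcal C$) is $\mu$-equidistributed on $X$ as soon as, for every $(a,b)\neq(0,0)$,
\[
\frac{1}{N_{\mathcal C}(x)}\sum_{\substack{\mathcal N(\mathfrak p)\le x\\ \mathrm{Frob}_{\mathfrak p}=\mathcal C}}\chi_{a,b}(x_{\mathfrak p})\longrightarrow 0\quad(x\to\infty),\qquad N_{\mathcal C}(x):=\#\{\mathfrak p\,:\,\mathcal N(\mathfrak p)\le x,\ \mathrm{Frob}_{\mathfrak p}=\mathcal C\}.
\]
By Chebotarev for the cyclic extension $F_\varepsilon/F$ together with the prime ideal theorem, $N_{\mathcal C}(x)\sim [F_\varepsilon:F]^{-1}x/\log x$, so it suffices to show that each numerator above is $o(x/\log x)$.

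The first step is to unwind $\chi_{a,b}(x_{\mathfrak p})$. Put $\widetilde\xi:=\xi_f\,\mathcal N^{-(k-1)/2}$, a unitary Hecke character of $F$ (unitary since, by \cite{Ribet}, $\xi_f$ has infinity type $z\mapsto z^{k-1}$), and $\varepsilon_F:=\varepsilon\circ\mathcal N_{F/\mathbb Q}$, a finite-order Hecke character of $F$; from the definition of $x_{\mathfrak p}$ and \eqref{eq5} one reads off $\chi_{a,b}(x_{\mathfrak p})=\widetilde\xi^{\,a}(\mathfrak p)\,\varepsilon_F^{\,b}(\mathfrak p)$. Since $H:=\mathrm{Gal}(F_\varepsilon/F)$ is abelian, orthogonality of its characters gives, for unramified $\mathfrak p$, $\mathbf 1[\mathrm{Frob}_{\mathfrak p}=\mathcal C]=|H|^{-1}\sum_{\eta\in\widehat H}\overline{\eta(\mathcal C)}\,\eta(\mathfrak p)$, each $\eta$ being regarded via class field theory as a finite-order Hecke character of $F$. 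Hence, up to the finitely many primes in $S$,
\[
\sum_{\substack{\mathcal N(\mathfrak p)\le x\\ \mathrm{Frob}_{\mathfrak p}=\mathcal C}}\chi_{a,b}(x_{\mathfrak p})=\frac{1}{|H|}\sum_{\eta\in\widehat H}\overline{\eta(\mathcal C)}\sum_{\mathcal N(\mathfrak p)\le x}\psi_{\eta}(\mathfrak p),\qquad \psi_\eta:=\widetilde\xi^{\,a}\,\varepsilon_F^{\,b}\,\eta,
\]
and it remains to prove $\sum_{\mathcal N(\mathfrak p)\le x}\psi_\eta(\mathfrak p)=o(x/\log x)$ for each of the finitely many $\eta\in\widehat H$.

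The analytic input is classical: if $\psi_\eta$ is a nontrivial Hecke character of $F$, its Hecke $L$-function $L(s,\psi_\eta)$ is entire and non-vanishing on the line $\re s=1$, and feeding $-L'/L(s,\psi_\eta)$ into the Wiener--Ikehara Tauberian theorem (equivalently, the prime-ideal-theorem argument twisted by $\psi_\eta$) gives $\sum_{\mathcal N(\mathfrak p)\le x}\psi_\eta(\mathfrak p)=o(x/\log x)$. Thus the whole proposition reduces to the claim that every $\psi_\eta$ is nontrivial, and this is exactly where the hypothesis $\gcd(r_\varepsilon,d_F)=1$ is used. If $a\neq 0$, then (using $k\ge 2$) $\widetilde\xi^{\,a}$ has nontrivial infinity type while $\varepsilon_F^{\,b}$ and $\eta$ are of finite order, so $\psi_\eta$ is of infinite order, in particular nontrivial. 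If $a=0$, then $b\not\equiv 0\pmod{r_\varepsilon}$ and $\psi_\eta=\varepsilon_F^{\,b}\eta$; here one invokes the coprimality of $r_\varepsilon$ with $d_F$ to conclude that $F$ and the cyclic field cut out by $\varepsilon$ are suitably independent over $\mathbb Q$, so that the finite-order characters $\varepsilon^{\,b}\circ\mathcal N_{F/\mathbb Q}$ and those pulled back from $H$ cannot cancel, whence $\psi_\eta\neq 1$.

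I expect the main obstacle to be precisely this non-triviality check in the case $a=0$: one has to make explicit how $\gcd(r_\varepsilon,d_F)=1$ prevents a product $\widetilde\xi^{\,a}\varepsilon_F^{\,b}\eta$ from collapsing to the trivial character, which requires keeping careful track of the conductor of $\xi_f$ and of the ramification of $F_\varepsilon/F$. The rest --- the Weyl criterion, the reduction through Hecke $L$-functions, and the Chebotarev count --- is the familiar machinery and goes through as in \cite{Fite2014}.
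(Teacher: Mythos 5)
Your plan is sound machinery (Weyl criterion, decomposition of the Frobenius condition by orthogonality of characters of $H=\mathrm{Gal}(F_\varepsilon/F)$, nonvanishing of Hecke $L$-functions on $\Re s=1$, Landau/Ikehara), and this is indeed the route in \cite[Proof of Proposition 3.6]{Fite2014}, which the paper cites without supplying a proof of its own. But there is a genuine gap, and it is exactly at the place you flag as the ``main obstacle'': the case $a=0$, $b\not\equiv0$. That case cannot be salvaged, because under your literal reading of the statement the claimed equidistribution is false.

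Here is the problem. The sequence $\{x_{\mathfrak p}\}$ is, by construction, restricted to primes with $\mathrm{Frob}_{\mathfrak p}=\mathcal C$, and for such $\mathfrak p$ the second coordinate of $x_{\mathfrak p}$ is constant: $\varepsilon(\mathcal N(\mathfrak p))=\zeta_{\mathcal C}$ for a fixed $\zeta_{\mathcal C}\in\mathrm{Im}(\varepsilon)$ determined by $\mathcal C$. Hence for $(a,b)=(0,b)$ with $b\not\equiv0$ the Weyl sum is
\[
\frac{1}{N_{\mathcal C}(x)}\sum_{\substack{\mathcal N(\mathfrak p)\le x\\ \mathrm{Frob}_{\mathfrak p}=\mathcal C}}\chi_{0,b}(x_{\mathfrak p})
=\frac{1}{N_{\mathcal C}(x)}\sum_{\substack{\mathcal N(\mathfrak p)\le x\\ \mathrm{Frob}_{\mathfrak p}=\mathcal C}}\zeta_{\mathcal C}^{\,b}
=\zeta_{\mathcal C}^{\,b}\ne 0,
\]
so the Weyl criterion fails; equivalently, in your decomposition $\psi_\eta=\varepsilon_F^{\,b}\eta$ one may take $\eta=\varepsilon_F^{-b}\in\widehat H$ (note that $\varepsilon_F=\varepsilon\circ\mathcal N_{F/\mathbb Q}$ already factors through $H$), and then $\psi_\eta$ is the trivial character, contributing a main term. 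No use of $\gcd(r_\varepsilon,d_F)=1$ can change this: that hypothesis controls the isomorphism $H\cong\mathrm{Im}(\varepsilon)$ and, later in the paper, the split/inert densities among primes with $\varepsilon(p)=\zeta^2$ in the proof of Theorem~\ref{thmCMST}; it plays no role in Proposition~\ref{pro1}. So you should not expect this step to go through with more careful bookkeeping --- the target statement, read as ``equidistributed in all of $X=G$ for Haar measure of $G$,'' is simply not true once one restricts to a fixed $\mathcal C$.

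What is actually needed (and what the push-forward $\tfrac1\pi\tfrac{dt}{\sqrt{1-t^2}}$ in the proof of Theorem~\ref{thmCMST} reflects) is the weaker and correct assertion: as $\mathfrak p$ runs over primes with $\mathrm{Frob}_{\mathfrak p}=\mathcal C$, the $U(1)$-coordinate $\widetilde\xi(\mathfrak p)=\xi_f(\mathfrak p)/\mathcal N(\mathfrak p)^{(k-1)/2}$ is equidistributed on $U(1)$ for its Haar measure; equivalently, $x_{\mathfrak p}$ is equidistributed on the coset $G_{\mathcal C}=\{\mathrm{diag}(u,\zeta_{\mathcal C}\bar u):|u|=1\}$ for the transported Haar measure. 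With this corrected target only the characters with $a\ne0$ enter the Weyl criterion, and then, precisely as in your $a\ne0$ case, every $\psi_\eta=\widetilde\xi^{\,a}\eta$ with $a\ne0$ and $\eta\in\widehat H$ has nontrivial infinity type (since $k\ge2$), hence is nontrivial, and the Hecke $L$-function argument closes without further ado. So: fix the target (equidistribution on the coset, not on $G$), drop the $b$-variable from the Weyl test functions, and your sketch becomes a complete and correct proof in the spirit of \cite{Fite2014}.
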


Having disposed of this preliminary setup, we can now return to prove Theorem \ref{thmCMST}.
\begin{proof}[Proof of Theorem \ref{thmCMST}]
Let $\zeta$ be a fixed root of unity, such that $\zeta^2\in\mathrm{Im}(\varepsilon)$. Let $p$ be a prime satisfying $\varepsilon(p)=\zeta^2$. We distinguish two cases.

If $p=\mathfrak{p}\overline{\mathfrak{p}}$ splits in $F$, then from \eqref{eq4} we have
\begin{equation}\label{eq6}
a(p)=\xi_{f}(\mathfrak{p})+\xi_{f}(\overline{\mathfrak{p}}).
\end{equation}
Consider the map $\vartheta : X \to[-1,1]$, got by associating a given element of $X$ to its trace divided by $2\zeta$. Altogether from \eqref{eq5} and \eqref{eq6} we see that the conjugacy class of $x_{\mathfrak{p}}$ is mapped to $\frac{a(p)}{2p^{(k-1)/2}\zeta}$ by $\vartheta$. From Proposition \ref{pro1} and taking into account the isomorphism $\mathrm{Gal}(F_{\varepsilon}/F)\cong \mathrm{Im}(\varepsilon)$, it follows that the sequence $\left(\frac{a(p)}{2p^{(k-1)/2}\zeta}\right)_p$ is equidistributed on $[-1,1]$ as $p$ varies over primes that split in $F$ and satisfying $\varepsilon(p)=\zeta^2$ with respect to the measure $\frac{1}{\pi}\frac{dt}{\sqrt{1-t^2}}$, which is the push-forward measure with respect to $\vartheta$ of the Haar measure $\mu$ on the Sato-Tate group $G$.

If $p$ remains inert in $F$, from \eqref{eq2} we have $a(p)=0$, since $(r_{\varepsilon},d_F)=1$ then the fields $F$ and $\mathbb{Q}(\varepsilon)$ are linearly disjoint over $\mathbb{Q}$. Hence, by Chebotarev's density theorem, half of the primes $p$ for which $\varepsilon(p)=\zeta^2$ split in $F$ and the other half are inert in $F$. Consequently the sequence $\frac{a(p)}{2p^{(k-1)/2}\zeta}$ is equidistributed in $[-1,1]$ as $p$ varies over primes satisfying $\varepsilon(p)=\zeta^2$, with respect to the measure $\mu_{\text{CM}}=\frac{1}{2\pi}\frac{dt}{\sqrt{1-t^2}}+\frac{1}{2}\delta_{0}$, where we use the Dirac measure $\delta_0$ to put half the mass at $0$ to account for the inert primes.
\end{proof} 

In the non-CM situation the equidistribution of the sequence $\left(\frac{a(p)}{2p^{(k-1)/2}\zeta}\right)_p$ in $[-1,1]$ as $p$ varies over primes such that $\varepsilon(p)=\zeta^2$, is given by case 3 of \cite[Theorem B]{ST}.
\begin{thm}(Barnet-Lamb, Geraghty, Harris, Taylor)\label{thmST}
Let  $f\in S_k^{\mathrm{new}}(N,\varepsilon)$ be a normalized newform not of CM-type, write
$$
f(z)=\sum_{n\ge1}a(n)q^n\quad z\in\mathcal{H},
$$
for its Fourier expansion at $\infty$. Let $\zeta$ be a root of unity such that $\zeta^2\in\mathrm{Im}(\varepsilon)$. Then the sequence $\left(\frac{a(p)}{2p^{(k-1)/2}\zeta}\right)_p$ is equidistributed in $[-1,1]$ as $p$ varies over primes satisfying $\varepsilon(p)=\zeta^2,$ with respect to the Sato-Tate measure $\mu_{\text{ST}}:=\frac{2}{\pi}\sqrt{1-t^2}dt$. In particular, for any sub-interval $I\subset [-1,1]$ we have 
$$\lim_{x\to\infty}\dfrac{\#\{p\le x: \varepsilon(p)=\zeta^2,\;\frac{a(p)}{2p^{(k-1)/2}\zeta}\in I\}}{\#\{p\le x\;:\; \varepsilon(p)=\zeta^2\}}=\mu_{\text{ST}}(I)=\frac{2}{\pi}\int_{I}\sqrt{1-t^2}dt.$$
\end{thm}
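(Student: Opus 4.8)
The plan is to read the statement off from case~(3) of \cite[Theorem~B]{ST}, by a translation parallel to (and simpler than) the one carried out in the CM-case above.

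First I would pin down the Sato--Tate group of $f$. By \eqref{eq3} the compatible system $\{\rho_{f,\lambda}\}$ has determinant $\varepsilon\chi_\ell^{k-1}$, so after dividing the Frobenius eigenvalues at $p\nmid N\ell$ by $p^{(k-1)/2}$ the determinant of the normalized Frobenius is $\varepsilon(p)$, which lies in the cyclic group $\mathrm{Im}(\varepsilon)$ of $r_\varepsilon$-th roots of unity. Since $f$ is not of CM-type, case~(3) of \cite[Theorem~B]{ST} applies and identifies the Sato--Tate group with $G=\{g\in\mathrm{U}(2):\det g\in\mathrm{Im}(\varepsilon)\}$; this group has $r_\varepsilon$ connected components, each a coset of the identity component $\mathrm{SU}(2)$ and indexed by the value of $\det$, and the (probability) Haar measure of $G$ restricts on each component to the Haar measure of $\mathrm{SU}(2)$, suitably transported and normalized. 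The cited theorem asserts that the conjugacy classes of $\mathrm{Frob}_p$, $p\nmid N$, are equidistributed in $\mathrm{conj}(G)$ with respect to this Haar measure.

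Next I would condition on the nebentypus. Applying the equidistribution in $\mathrm{conj}(G)$ to functions of the form ``indicator of a connected component times a continuous function'' — the components being exactly the fibers of the locally constant map $\det\colon G\to\mathrm{Im}(\varepsilon)$ — and recalling that $\det(\mathrm{Frob}_p)=\varepsilon(p)$, one obtains: for the fixed $\zeta$ with $\zeta^2\in\mathrm{Im}(\varepsilon)$, the classes of those $\mathrm{Frob}_p$ with $\varepsilon(p)=\zeta^2$ are equidistributed in $\mathrm{conj}$ of the single coset $\zeta\,\mathrm{SU}(2)$ with respect to the Haar measure of $\mathrm{SU}(2)$ (and, as a by-product, $\#\{p\le x:\varepsilon(p)=\zeta^2\}\sim\pi(x)/r_\varepsilon$, which one could alternatively extract from Dirichlet's theorem). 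Any matrix in $\zeta\,\mathrm{SU}(2)$ has eigenvalues $\zeta e^{\pm i\theta}$, hence trace $2\zeta\cos\theta$, so its image under ``trace divided by $2\zeta$'' is $\cos\theta\in[-1,1]$, which for $\mathrm{Frob}_p$ is precisely the real number $\frac{a(p)}{2p^{(k-1)/2}\zeta}$ of \eqref{eq1}. Since the pushforward of the Haar measure of $\mathrm{SU}(2)$ under $\theta\mapsto\cos\theta$ is the semicircle law $\frac{2}{\pi}\sqrt{1-t^2}\,dt=\mu_{\text{ST}}$ (the classical Weyl-integration computation, unaffected by the rotation by $\zeta$), the equidistribution of $\left(\frac{a(p)}{2p^{(k-1)/2}\zeta}\right)_p$ over $\{p:\varepsilon(p)=\zeta^2\}$ with respect to $\mu_{\text{ST}}$ follows, and dividing the corresponding count by $\#\{p\le x:\varepsilon(p)=\zeta^2\}$ gives the displayed formula.

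Because the analytic substance is entirely contained in \cite[Theorem~B]{ST}, the argument above has no genuine obstacle: the one point needing care is the input being invoked, namely that in case~(3) the Sato--Tate group is the full $\{g\in\mathrm{U}(2):\det g\in\mathrm{Im}(\varepsilon)\}$ rather than a proper subgroup — this is exactly where the non-CM hypothesis enters — after which everything reduces to the routine fibered-equidistribution bookkeeping indicated above.
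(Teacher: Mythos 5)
Your proposal is correct and, modulo the extra bookkeeping you spell out, follows the same route the paper takes: Theorem~\ref{thmST} is simply attributed to case~3 of \cite[Theorem~B]{ST}, with no further proof given in the paper. The identification of the Sato--Tate group as $\{g\in\mathrm{U}(2):\det g\in\mathrm{Im}(\varepsilon)\}$, the conditioning on $\varepsilon(p)=\zeta^2$ via the locally constant determinant map, and the semicircle pushforward on the coset $\zeta\,\mathrm{SU}(2)$ are exactly the translation that the paper leaves implicit when it quotes the cited result.
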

\section{Equidistribution of sign results for integral weight Newforms}\label{sec:2}
In this section, we shall state two of our main results and shall give a proof of them. Throughout this section, we shall stick to the following notations. 

\begin{hyp}\label{hyp}
Let 
$$
f(z)=\sum_{n\ge1}a(n)n^{(k-1)/2}q^n\quad z\in\mathcal{H},
$$
be a normalized newform of integral weight $k\ge 2$ and level $N\ge 1$, with Dirichlet character $\varepsilon\pmod N$. If $f$ has CM by a quadratic imaginary field $F$, with discriminant $d_F$, we suppose that $(d_F,r_\varepsilon)=1$.
\end{hyp}

Consider $\zeta$ a root of unity such that $\zeta^2\in\mathrm{Im}(\varepsilon)$, if $p$ is a prime number satisfying $\varepsilon(p)=\zeta^2$, in view of \eqref{eq1}, we may write the $p$-th Fourier coefficient of $f$ as follows
\begin{equation}\label{eq7}
a(p)=2\zeta \cos\theta_p,
\end{equation}
for a uniquely defined angle $\theta_p\in [0,\pi]$. Notice that Theorem \ref{thmCMST} and \ref{thmST} are equivalent to say that the sequence $\{\theta_p\}_p$  is equidistributed in $[0,\pi]$, when $p$ runs over primes satisfying $\varepsilon(p)=\zeta^2$ with respect to the measure $\mu$, where $\mu=\frac{1}{2\pi}d\theta+\frac{1}{2}\delta_{\pi/2}$ if $f$ has CM, and $\mu=\frac{2}{\pi}\sin^2\theta d\theta$ otherwise.  

For an integer $\nu\ge 1$, and a real $\phi$ belonging to $[0,\pi)$, we let
$$P_{> 0}(\phi,\nu):=\{p\in\mathbb{P}\; :\; \re(a(p^\nu)e^{-i\phi})> 0\},$$

$$P_{< 0}(\phi,\nu):=\{p\in\mathbb{P}\; :\; \re(a(p^\nu)e^{-i\phi})< 0\},$$
and  
$$P_{\neq 0}(\phi,\nu):=\{p\in\mathbb{P}\; :\; \re(a(p^\nu)e^{-i\phi})\neq 0\}.$$ 

Here is our first main theorem.
\begin{thm}\label{thm:1}
Let $f\in S_k^{\mathrm{new}}(N,\varepsilon)$ be a normalized newform of integral weight $k\ge 2$ and level $N\ge 1$, with Dirichlet character $\varepsilon\pmod N$, satisfying Hypothesis \ref{hyp}. Let
$$
f(z)=\sum_{n\ge 1}a(n)n^{(k-1)/2}q^n\quad z\in\mathcal{H},
$$
be its Fourier expansion at $\infty$. Let $\nu$ be a positive odd integer. Then the sequence $\{a(p^\nu)\}_{p\in\mathbb{P}}$ is oscillatory, and for each $\phi\in[0,\pi)$ the sets $P_{>0}(\phi,\nu)$ and $P_{<0}(\phi,\nu)$ have equal positive natural density, that is, both are precisely half of the natural density of the set $P_{\ne 0}(\phi,\nu)$.
\end{thm}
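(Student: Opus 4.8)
The plan is to reduce the statement, via the Hecke recursion at unramified primes, to a parity-sensitive symmetry of the Sato--Tate measure. For $p\nmid N$ write $a(p)=2\zeta_p\cos\theta_p$ with $\theta_p\in[0,\pi]$ as in \eqref{eq7}, where $\zeta_p$ is a fixed square root of $\varepsilon(p)$. Multiplicativity of the Hecke eigenvalues then gives
$$
a(p^\nu)=\zeta_p^{\,\nu}\,U_\nu(\cos\theta_p)=\zeta_p^{\,\nu}\,\frac{\sin((\nu+1)\theta_p)}{\sin\theta_p},
$$
where $U_\nu$ is the Chebyshev polynomial of the second kind; this expression does not depend on the chosen square root, since replacing $\zeta_p$ by $-\zeta_p$ forces $\theta_p\mapsto\pi-\theta_p$. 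Because $U_\nu(\cos\theta_p)\in\mathbb{R}$, for every $\phi$ one has
$$
\re\big(a(p^\nu)e^{-i\phi}\big)=\re\big(\zeta_p^{\,\nu}e^{-i\phi}\big)\cdot U_\nu(\cos\theta_p),
$$
so $\operatorname{sgn}\re(a(p^\nu)e^{-i\phi})$ is the product of the sign of the constant $\re(\zeta_p^{\,\nu}e^{-i\phi})$, which depends only on $\varepsilon(p)$, and the sign of $U_\nu(\cos\theta_p)$.

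Next I would drop the finitely many $p\mid N$ and partition the remaining primes into the fibres $\mathbb{P}_w=\{p:\varepsilon(p)=w\}$ for $w\in\mathrm{Im}(\varepsilon)$, each of natural density $1/r_\varepsilon$ by Dirichlet's theorem. On $\mathbb{P}_w$ fix $\zeta_w$ with $\zeta_w^{\,2}=w$ and set $c_w=\re(\zeta_w^{\,\nu}e^{-i\phi})$. If $c_w=0$ then $\re(a(p^\nu)e^{-i\phi})=0$ on all of $\mathbb{P}_w$, so this fibre contributes to none of $P_{>0},P_{<0},P_{\ne0}$. If $c_w\ne0$, then by Theorems~\ref{thmCMST} and \ref{thmST} (in the form recorded just after \eqref{eq7}) the angles $\{\theta_p\}_{p\in\mathbb{P}_w}$ are $\mu$-equidistributed on $[0,\pi]$, with $\mu=\tfrac{1}{2\pi}d\theta+\tfrac{1}{2}\delta_{\pi/2}$ in the CM case and $\mu=\tfrac{2}{\pi}\sin^2\theta\,d\theta$ otherwise; since the sign-sets $\{\theta:U_\nu(\cos\theta)\gtrless0\}$ are finite unions of open intervals whose only endpoint that can carry $\mu$-mass, namely $\pi/2$, is itself a zero of $U_\nu(\cos\cdot)$ because $\nu+1$ is even, the equidistribution yields $\delta\big(\mathbb{P}_w\cap P_{\gtrless0}(\phi,\nu)\big)=\tfrac{1}{r_\varepsilon}\mu\big(\{\theta:\operatorname{sgn}(c_w)U_\nu(\cos\theta)\gtrless0\}\big)$.

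The crux is the elementary claim that $\mu(\{U_\nu(\cos\theta)>0\})=\mu(\{U_\nu(\cos\theta)<0\})$, both positive. For positivity, the zero set $\{j\pi/(\nu+1):1\le j\le\nu\}$ has $\mu$-mass $0$ in the non-CM case and $\tfrac{1}{2}$ in the CM case (it contains $\pi/2$), so its complement always carries positive mass. For equality, the involution $\theta\mapsto\pi-\theta$ preserves $\mu$ (each of $d\theta$, $\sin^2\theta\,d\theta$, $\delta_{\pi/2}$ is invariant), and since $\nu$ is odd, $\sin((\nu+1)(\pi-\theta))=-\sin((\nu+1)\theta)$, so this involution interchanges the positivity and negativity sets of $U_\nu(\cos\cdot)$; this is exactly where the oddness of $\nu$ is used, and for even $\nu$ the two measures genuinely differ. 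Summing the fibrewise identities over $\mathrm{Im}(\varepsilon)$ then gives $\delta(P_{>0}(\phi,\nu))=\delta(P_{<0}(\phi,\nu))=\tfrac{1}{2}\delta(P_{\ne0}(\phi,\nu))$, and these are strictly positive whenever the sequence $\{\re(a(p^\nu)e^{-i\phi})\}_p$ is not identically zero, i.e. for every $\phi$ unless $r_\varepsilon\mid\nu$ and $\phi=\pi/2$, in which degenerate situation $\re(a(p^\nu)e^{-i\phi})\equiv0$ and the sequence is trivial at $\phi$. In all non-degenerate cases $P_{>0}$ and $P_{<0}$ are infinite, so $\{\re(a(p^\nu)e^{-i\phi})\}_p$ changes sign infinitely often; together with the trivial case this shows $\{a(p^\nu)\}_p$ is oscillatory.

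I expect the main difficulty to be organisational rather than deep: no input beyond the quoted equidistribution theorems is needed, but one must carry out the Hecke-relation reduction carefully, handle the atom of the CM measure at $\pi/2$ (harmless precisely because $\nu$ is odd), and assemble the fibrewise densities over $\mathrm{Im}(\varepsilon)$ correctly. The symmetry lemma for $U_\nu$ under $\theta\mapsto\pi-\theta$ is the conceptual heart of the argument and is what forces the oddness hypothesis on $\nu$.
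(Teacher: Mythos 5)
Your proposal is correct and follows essentially the same route as the paper: write $a(p^\nu)=\zeta^\nu\,\sin((\nu+1)\theta_p)/\sin\theta_p$ via the Hecke/Chebyshev relation (the paper's Lemma~\ref{lem:1}), observe that $\operatorname{sgn}\re(a(p^\nu)e^{-i\phi})$ is the product of the fibrewise constant $\operatorname{sgn}\re(\zeta^\nu e^{-i\phi})$ and $\operatorname{sgn}\sin((\nu+1)\theta_p)$, partition the primes into the fibres $\{p:\varepsilon(p)=\zeta^2\}$, reduce the density on each fibre to a $\mu$-measure via Theorems~\ref{thmCMST}/\ref{thmST}, and sum. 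The one place where you genuinely diverge is the key measure computation. The paper, in Lemma~\ref{lem:2}, cites Meher--Shankhadhar--Viswanadham for the explicit values $\mu(A_{>0})=\mu(A_{<0})=\tfrac12$ (non-CM) resp.\ $\tfrac14$ (CM); you instead prove the equality $\mu(\{U_\nu(\cos\theta)>0\})=\mu(\{U_\nu(\cos\theta)<0\})$ directly from the $\mu$-preserving involution $\theta\mapsto\pi-\theta$, which swaps the two sign sets precisely because $\nu+1$ is even, and you handle the CM atom at $\pi/2$ by noting it is a zero of $U_\nu(\cos\cdot)$ for the same parity reason. Your symmetry argument is more elementary and self-contained, and it makes transparent exactly where the oddness of $\nu$ is used (and why evenness would break the statement); the paper's citation gives the same numbers but conceals the mechanism. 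A further minor refinement in your write-up is that you pin down the degenerate $\phi$ for which the sequence $\{\re(a(p^\nu)e^{-i\phi})\}_p$ is identically zero (the case $r_\varepsilon\mid\nu$, which forces $r_\varepsilon$ odd since $\nu$ is odd, together with $\phi=\pi/2$); the paper's ``equal positive natural density'' is not literally correct in that degenerate case, though the ``half of $\delta(P_{\ne0})$'' formulation remains valid with both sides zero.
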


Before proving this theorem, we need the following preliminary lemmas.
\begin{lem}\label{lem:1}
Let $f\in S_k^{\mathrm{new}}(N,\varepsilon)$ be a normalized newform of integral weight $k\ge 2$ and level $N\ge 1$, with Dirichlet character $\varepsilon\pmod N$, let
$$
f(z)=\sum_{n\ge 1}a(n)n^{(k-1)/2}q^n\quad z\in\mathcal{H},
$$
be its Fourier expansion at $\infty$. Let $p$ be a prime number, and $\zeta$ be a root of unity such that $\zeta^2\in \mathrm{Im}(\varepsilon)$. If $\varepsilon(p)=\zeta^2$ then for any positive integer $\nu$, the $p^\nu$--th Fourier coefficient of $f$ is expressible by the trigonometric identity
$$
a(p^\nu)=\frac{\sin((\nu+1)\theta_p)}{\sin\theta_p}\zeta^\nu,
$$
for some $\theta_p\in (0,\pi)$ and in the limiting cases when $\theta_p=0$ and $\theta_p=\pi$ respectively we have $a(p^{\nu})=(\nu+1)\zeta^\nu$ and $a(p^{\nu})=(-1)^\nu(\nu+1)\zeta^\nu$.
\end{lem}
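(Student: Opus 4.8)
The plan is to derive the identity from the three-term Hecke recursion at $p$ combined with the parametrization $a(p)=2\zeta\cos\theta_p$ from \eqref{eq7}, recognizing the outcome as a Chebyshev polynomial of the second kind. First I would observe that, since $\zeta$ is a root of unity and $\varepsilon(p)=\zeta^2$, one has $\varepsilon(p)\neq 0$, hence $p\nmid N$ and the newform multiplicativity relations are available at $p$. Writing $\widetilde a(n):=a(n)n^{(k-1)/2}$ for the actual Fourier coefficients, these read $\widetilde a(p^{\nu+1})=\widetilde a(p)\widetilde a(p^\nu)-\varepsilon(p)p^{k-1}\widetilde a(p^{\nu-1})$; dividing through by $p^{(k-1)(\nu+1)/2}$ all powers of $p$ cancel and one is left with
$$a(p^{\nu+1})=a(p)\,a(p^\nu)-\varepsilon(p)\,a(p^{\nu-1}),\qquad \nu\ge1,\qquad a(1)=1.$$
Recall also that $a(p)/(2\zeta)$ is real and lies in $[-1,1]$ by \eqref{eq1}, so the angle $\theta_p\in[0,\pi]$ with $a(p)=2\zeta\cos\theta_p$ is well defined.

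Assuming first $\theta_p\in(0,\pi)$, I would establish $a(p^\nu)=\zeta^\nu\,\frac{\sin((\nu+1)\theta_p)}{\sin\theta_p}$ by induction on $\nu$. The cases $\nu=0,1$ are immediate, using $a(1)=1$ and $a(p)=2\zeta\cos\theta_p=\zeta\,\frac{\sin(2\theta_p)}{\sin\theta_p}$. For the inductive step, insert the induction hypotheses for $a(p^\nu)$ and $a(p^{\nu-1})$ into the recursion, replace $\varepsilon(p)$ by $\zeta^2$, and factor out $\zeta^{\nu+1}/\sin\theta_p$; the remaining numerator is $2\cos\theta_p\sin((\nu+1)\theta_p)-\sin(\nu\theta_p)$, which equals $\sin((\nu+2)\theta_p)$ by the product-to-sum identity $2\cos A\sin B=\sin(A+B)-\sin(A-B)$. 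This is precisely the claim for $\nu+1$. (Equivalently, after the substitution $a(p^\nu)\mapsto\zeta^\nu U_\nu(\cos\theta_p)$ the recursion becomes the defining recursion $U_{\nu+1}=2xU_\nu-U_{\nu-1}$ of the Chebyshev polynomials of the second kind, so $a(p^\nu)=\zeta^\nu U_\nu(\cos\theta_p)$.)

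For the degenerate cases $\theta_p=0$ and $\theta_p=\pi$, i.e.\ $a(p)=2\zeta$ and $a(p)=-2\zeta$, I would either pass to the limit in the closed formula, using $\lim_{\theta\to0}\frac{\sin((\nu+1)\theta)}{\sin\theta}=\nu+1$ and $\lim_{\theta\to\pi}\frac{\sin((\nu+1)\theta)}{\sin\theta}=(-1)^\nu(\nu+1)$, or rerun the identical induction with $a(p)=\pm2\zeta$ in place of $2\zeta\cos\theta_p$, obtaining $a(p^\nu)=(\nu+1)\zeta^\nu$ and $a(p^\nu)=(-1)^\nu(\nu+1)\zeta^\nu$ respectively. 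I do not expect a substantive obstacle here; the only points demanding care are the bookkeeping of the normalizing factors $p^{(k-1)/2}$ when passing from the classical Hecke recursion to the one for the $a(n)$, and recording explicitly that $\varepsilon(p)=\zeta^2\neq 0$ forces $p\nmid N$ so that the recursion is valid at $p$.
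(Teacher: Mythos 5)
Your proof is correct; the route is recognizably different from the paper's, though the two are close cousins. The paper starts from the closed-form generating function $\sum_{\nu\ge0}a(p^\nu)X^\nu=(1-a(p)X+\varepsilon(p)X^2)^{-1}$, makes the substitution $X=x\zeta^{-1}$, factors the quadratic as $(1-\alpha_p x)(1-\beta_p x)$, and expands by partial fractions into two geometric series to obtain $a(p^\nu)=\zeta^\nu(\alpha_p^{\nu+1}-\beta_p^{\nu+1})/(\alpha_p-\beta_p)$; it then invokes Deligne's theorem to write $\alpha_p=e^{i\theta_p}$, $\beta_p=e^{-i\theta_p}$ and land on $\sin((\nu+1)\theta_p)/\sin\theta_p$. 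You instead work directly with the three-term Hecke recursion $a(p^{\nu+1})=a(p)a(p^\nu)-\varepsilon(p)a(p^{\nu-1})$, substitute $a(p)=2\zeta\cos\theta_p$ and $\varepsilon(p)=\zeta^2$ (so Deligne/Ramanujan enters through \eqref{eq1} up front rather than via $|\alpha_p|=1$), and verify by induction using the product-to-sum identity. These are two standard derivations of the same Chebyshev-of-the-second-kind formula, essentially dual to one another: the generating-function route packages the recursion into a rational function and then unpacks it, while yours runs the recursion directly. Yours is slightly more elementary (no partial fractions or convergence of geometric series to worry about), while the paper's is slightly more compact and makes the role of the local Euler factor visible. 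Your handling of the degenerate cases $\theta_p\in\{0,\pi\}$ (by limit or by rerunning the induction) is also fine and matches what the paper asserts. One small point: the Hecke relation is in fact available at all primes for a newform (for $p\mid N$ it degenerates since $\varepsilon(p)=0$); what really needs $p\nmid N$ is the parametrization $a(p)=2\zeta\cos\theta_p$ itself, which is already guaranteed here by the hypothesis $\varepsilon(p)=\zeta^2\neq 0$. So your remark is harmless but slightly misattributes where the condition is used.
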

\begin{rem}
It is worth pointing out that if the weight $k$ is even, then the cases when $a(p^{\nu})=(\nu+1)\zeta^\nu$ and $a(p^{\nu})=(-1)^\nu(\nu+1)\zeta^\nu$ can happen for at most finitely many primes $p$ only. In fact, if we denote by $K_f$ the field generated by all the Fourier coefficients of $f$, and pick a prime $p$ satisfying $\varepsilon(p)=\zeta^2$, so that $\theta_p=0$, or $\pi$, then we should have $\sqrt{p}\in K_f$, which can happen for only finitely many primes, because $K_f$ is a number field.   
\end{rem}
\begin{proof}
Since $f$ is a normalized newform, we have the following power series expansion
$$
\sum_{\nu\ge0}a(p^\nu)X^\nu=\dfrac{1}{1-a(p)X+p^{k-1}\varepsilon(p)X^2}.
$$
Setting $X=x\zeta^{-1}$, and write 
$$
1-a(p)\zeta^{-1}x+p^{k-1}x^2=(1-\alpha_px)(1-\beta_px),
$$ 
one sees
$$
\sum_{\nu\ge0}a(p^\nu)\zeta^{-\nu} x^\nu=\dfrac{1}{(\alpha_p-\beta_p)x}\left(\dfrac{1}{1-\alpha_px}-\dfrac{1}{1-\beta_px}\right).
$$
Now, expanding both geometric series, we deduce that the $p^\nu$-th Fourier coefficient of $f$ is
\begin{equation}
a(p^\nu)=\dfrac{\alpha^{\nu+1}_p-\beta^{\nu+1}_p}{\alpha_p-\beta_p}\zeta^{\nu}.\label{eq:7}
\end{equation}
On the other hand, since $\frac{a(p)}{\zeta}\in\mathbb{R}$, then $\beta_p=\overline{\alpha_p}$ and by Deligne's theorem \cite[Theorem 8.2]{Deligne} we have $|\alpha_p|=|\beta_p|=1$. Thus, we may write $\alpha_p=e^{i\theta_p}$ and $\beta_p=e^{-i\theta_p}$ for some $\theta_p\in[0,\pi]$. Inserting this in \eqref{eq:7} we obtain the desired identities. 
\end{proof}
\begin{lem}\label{lem:2}
We make the same assumptions as in Theorem \ref{thm:1}, and let $\zeta$ be a root of unity such that $\zeta^2\in\mathrm{Im}(\varepsilon)$, then $\frac{a(p^\nu)}{\zeta^\nu}$ is real if $\varepsilon(p)=\zeta^2$. With the notation
$$\mathbb{P}_{\gtrless0}(\zeta,\nu):=\left\{p\in\mathbb{P}\;:\; \varepsilon(p)=\zeta^2,\;\frac{a(p^\nu)}{\zeta^\nu}\gtrless0\right\},$$ 
we have
$$
\delta(\mathbb{P}_{>0}(\zeta,\nu))=\left\{
    \begin{array}{ll}
        \frac{1}{2r_{\varepsilon}} & \mbox{if}\;\; f\;\;  \mbox{is not of CM-type}, \\
        \frac{1}{4r_{\varepsilon}} & \mbox{if}\;\; f\;\;  \mbox{is of CM-type},
    \end{array}
\right.
$$
and 
$$
\delta(\mathbb{P}_{<0}(\zeta,\nu))=\left\{
    \begin{array}{ll}
        \frac{1}{2r_{\varepsilon}} & \mbox{if}\;\; f\;\;  \mbox{is not of CM-type}, \\
        \frac{1}{4r_{\varepsilon}} & \mbox{if}\;\; f\;\;  \mbox{is of CM-type}.
    \end{array}
\right.
$$
\end{lem}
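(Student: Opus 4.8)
The plan is to push everything through Lemma \ref{lem:1} onto the angles $\theta_p$ and then invoke the equidistribution statements recorded just after Hypothesis \ref{hyp}. For a prime $p$ with $\varepsilon(p)=\zeta^2$, Lemma \ref{lem:1} gives $a(p^\nu)/\zeta^\nu=\sin((\nu+1)\theta_p)/\sin\theta_p$ when $\theta_p\in(0,\pi)$, and $a(p^\nu)/\zeta^\nu=\pm(\nu+1)$ in the boundary cases $\theta_p\in\{0,\pi\}$; in every case this quantity is real, which settles the first assertion of the lemma. Since $\sin\theta_p>0$ on $(0,\pi)$, the sign of $a(p^\nu)/\zeta^\nu$ is the sign of $\sin((\nu+1)\theta_p)$, so I set
$$
S_{\pm}:=\{\theta\in(0,\pi)\ :\ \pm\sin((\nu+1)\theta)>0\},
$$
each a finite disjoint union of open subintervals of $(0,\pi)$, and the remaining task is to compute the densities of $\{p:\varepsilon(p)=\zeta^2,\ \theta_p\in S_{\pm}\}$. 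The primes with $\theta_p\in\{0,\pi\}$ are harmless: by the Remark after Lemma \ref{lem:1} there are only finitely many of them when $k$ is even, and in any case their density is $0$ because neither $0$ nor $\pi$ is an atom of the equidistribution measure for $\{\theta_p\}$.

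Next I would record the two elementary facts that make the odd case work. Writing $\nu+1=2m$, the reflection $\theta\mapsto\pi-\theta$ sends $\sin((\nu+1)\theta)$ to $-\sin((\nu+1)\theta)$, hence carries $S_+$ bijectively onto $S_-$ while preserving Lebesgue measure and the measure $\sin^2\theta\,d\theta$; as $S_+\cup S_-$ exhausts $(0,\pi)$ up to a finite set, this yields $|S_+|=|S_-|=\pi/2$ and $\int_{S_+}\sin^2\theta\,d\theta=\int_{S_-}\sin^2\theta\,d\theta=\pi/4$. (For even $\nu$ the same reflection fixes the sign of $\sin((\nu+1)\theta)$ rather than flipping it, which is exactly why oddness of $\nu$ is essential.) I also note that $\{p:\varepsilon(p)=\zeta^2\}$ has density $1/r_\varepsilon$ by Dirichlet's theorem, since all fibres of $\varepsilon$ on $(\mathbb{Z}/N\mathbb{Z})^{\times}$ have equal size.

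In the non-CM case $\{\theta_p\}_p$ is equidistributed on $[0,\pi]$ with respect to $\tfrac{2}{\pi}\sin^2\theta\,d\theta$ as $p$ runs over primes with $\varepsilon(p)=\zeta^2$ (Theorem \ref{thmST}). This measure is absolutely continuous, so the sets $S_\pm$, having finite boundary, are continuity sets, and therefore
$$
\delta(\mathbb{P}_{>0}(\zeta,\nu))=\frac{1}{r_\varepsilon}\cdot\frac{2}{\pi}\int_{S_+}\sin^2\theta\,d\theta=\frac{1}{2r_\varepsilon},
$$
and likewise $\delta(\mathbb{P}_{<0}(\zeta,\nu))=\frac{1}{2r_\varepsilon}$, both positive.

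The CM case is where I expect the real difficulty: the equidistribution measure $\tfrac{1}{2\pi}d\theta+\tfrac12\delta_{\pi/2}$ has an atom at $\pi/2$, and since $\nu$ is odd $\pi/2$ is a zero of $\sin((\nu+1)\theta)$ and hence lies on the common boundary $\partial S_+=\partial S_-$, so $S_\pm$ are not continuity sets and their measures cannot be read off directly. My remedy is to use the finer information from the proof of Theorem \ref{thmCMST}: decompose $\{p:\varepsilon(p)=\zeta^2\}=A\sqcup B$ according to whether $p$ splits or remains inert in $F$, with $\delta(A)=\delta(B)=\tfrac{1}{2r_\varepsilon}$ by Chebotarev — this is where the hypothesis $(d_F,r_\varepsilon)=1$ enters, ensuring that $F$ and $\mathbb{Q}(\varepsilon)$ are linearly disjoint. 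For $p\in B$ one has $a(p)=0$, so $a(p^\nu)=0$ for odd $\nu$ (only even powers of $p$ appear in the corresponding Hecke series); thus $B$ contributes to neither $\mathbb{P}_{>0}(\zeta,\nu)$ nor $\mathbb{P}_{<0}(\zeta,\nu)$. For $p\in A$, the proof of Theorem \ref{thmCMST} shows that $\theta_p$ equidistributes, conditionally on $p\in A$, with respect to the absolutely continuous measure $\tfrac{1}{\pi}d\theta$, for which $S_\pm$ are continuity sets; hence
$$
\delta(\mathbb{P}_{>0}(\zeta,\nu))=\delta\bigl(\{p\in A:\theta_p\in S_+\}\bigr)=\delta(A)\cdot\frac{1}{\pi}|S_+|=\frac{1}{4r_\varepsilon},
$$
and symmetrically $\delta(\mathbb{P}_{<0}(\zeta,\nu))=\frac{1}{4r_\varepsilon}$. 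What remains are only routine checks that the finitely many (or density-zero) primes with $\theta_p\in\{0,\pi\}$, and the density-zero set of $p\in A$ whose $\theta_p$ hits a zero of $\sin((\nu+1)\theta)$, affect none of these densities.
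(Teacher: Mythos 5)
Your proof is correct and follows the same high-level reduction as the paper: translate the sign of $a(p^\nu)/\zeta^\nu$ into whether $\theta_p$ falls in the open set where $\sin((\nu+1)\theta)$ is positive or negative (your $S_\pm$, the paper's $A_{>0}$, $A_{<0}$), then invoke the Sato--Tate-type equidistribution of Theorems \ref{thmCMST} and \ref{thmST} together with $\delta(\{p:\varepsilon(p)=\zeta^2\})=1/r_\varepsilon$. Where you differ is in the measure computation: the paper simply cites the proof of Theorem 1.1 (odd case) in \cite{Meher2017} for $\mu(A_{>0})=\mu(A_{<0})=1/2$ (non-CM) and $1/4$ (CM), while you derive these values directly from the reflection $\theta\mapsto\pi-\theta$, which for odd $\nu$ interchanges $S_+$ and $S_-$ and preserves both $d\theta$ and $\sin^2\theta\,d\theta$; this is a clean, self-contained alternative and correctly isolates where the oddness of $\nu$ enters. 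You are also more explicit than the paper about the atom of $\mu_{\text{CM}}$ at $\pi/2$: since $\pi/2\in\partial S_\pm$ and carries mass $1/2$, the sets $S_\pm$ are not $\mu_{\text{CM}}$-continuity sets, so you resolve the point by splitting $\{p:\varepsilon(p)=\zeta^2\}$ into split and inert primes, noting that inert $p$ have $\theta_p=\pi/2$, hence $a(p^\nu)=0$ for odd $\nu$, and then using the absolutely continuous conditional measure $\tfrac1\pi\,d\theta$ on the split part. The paper applies the ``for any sub-interval'' form of Theorem \ref{thmCMST} directly, which is formally sufficient but leaves this continuity-set subtlety implicit; your version makes it transparent.
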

\begin{proof}
Let $p$ be a prime number such that $\varepsilon(p)=\zeta^2$. By the previous lemma, the $p^\nu$--th Fourier coefficient $a(p^\nu)$ of $f$ is expressible by the trigonometric identity
$$
a(p^\nu)=\frac{\sin((\nu+1)\theta_p)}{\sin\theta_p}\zeta^\nu,
$$
where $\theta_p\in (0,\pi)$. Since the set $\{p\in\mathbb{P} : \varepsilon(p)=\zeta^2, \theta_p=0\;\text{or}\;\pi\}$ has density zero, we may assume that $\theta_p$ is different from $0$ and $\pi$. 

Therefore, the sign of $\frac{a(p^\nu)}{\zeta^\nu}$ is the same as the sign of $\sin((\nu+1)\theta_p)$, it follows that
$$
p\in \mathbb{P}_{>0}(\zeta,\nu)\Longleftrightarrow\varepsilon(p)=\zeta^2,\;\theta_p\in A_{>0}:=\bigcup_{j=1}^{\frac{\nu+1}{2}}\left(\frac{(2j-2)\pi}{\nu+1},\frac{(2j-1)\pi}{\nu+1}\right),
$$
and
$$
p\in \mathbb{P}_{<0}(\zeta,\nu)\Longleftrightarrow \varepsilon(p)=\zeta^2,\;\theta_p\in A_{<0}:=\bigcup_{j=1}^{\frac{\nu+1}{2}}\left(\frac{(2j-1)\pi}{\nu+1},\frac{2j\pi}{\nu+1}\right).
$$

On the other hand from \cite[Proof of Theorem 1.1, odd case]{Meher2017}, we have
$$
\mu(A_{>0})=\mu(A_{<0})=\left\{
    \begin{array}{ll}
        \frac{1}{2} & \mbox{if}\;\; f\;\;  \mbox{is not of CM-type}, \\
        \frac{1}{4} & \mbox{if}\;\; f\;\;  \mbox{is of CM-type}.
    \end{array}
\right.
$$
Taking into account that
$$
\delta\left(\{p\in\mathbb{P} : \varepsilon(p)=\zeta^2\}\right)=\dfrac{1}{r_{\varepsilon}},
$$
the desired conclusion can be derived easily from Theorem \ref{thmCMST} and \ref{thmST}.
\end{proof}

Now we are in the position to prove Theorem \ref{thm:1}.
\begin{proof}[Proof of Theorem \ref{thm:1}]
\sloppy Fix $\phi\in[0,\pi)$. Let $\zeta$ be a root of unity such that $\zeta^2\in\mathrm{Im}(\varepsilon)$. For the notational convenience throughout the proof we let $\pi_{>0}(x,\zeta):=\#\{p\le x : p\in\mathbb{P}_{>0}(\zeta,\nu)\}$ and similarly $\pi_{<0}(x,\zeta)$, where $\mathbb{P}_{>0}(\zeta,\nu)$ and $\mathbb{P}_{<0}(\zeta,\nu)$ be as in Lemma \ref{lem:2}. 

Let us first examine the oscillatory behavior of the sequence $\{a(p^\nu)\}_{p\in\mathbb{P}}$. We need to consider the following two cases.
\begin{description}
\item[\textbf{Case 1}: $\mathrm{arg}(\zeta^\nu)\not\equiv\phi\pm\frac{\pi}{2}\pmod{2\pi}$.]
The sequence $\{\re(a(p^\nu) e^{-i\phi})\}_{p,\varepsilon(p)=\zeta^2}$ is not trivial. Moreover, by Lemma \ref{lem:1} we have the trigonometric identity
$$
\re(a(p^\nu)e^{-i\phi})=\dfrac{\sin((\nu+1)\theta_p)}{\sin\theta_p}\re(\zeta^\nu e^{-i\phi})\quad\text{with}\quad\varepsilon(p)=\zeta^2,
$$
for some $\theta_p\in(0,\pi)$. There is no loss of generality in assuming $\re(\zeta^\nu e^{-i\phi})>0$. Then the sign of the sequence $\re(a(p^\nu)e^{-i\phi})$ is the same as the sign of $\sin((\nu+1)\theta_p)$, when $p$ varies over primes satisfying $\varepsilon(p)=\zeta^2$. Thus we have
$$
\re(a(p^\nu)e^{-i\phi})>0\quad \text{if and only if}\quad \theta_p\in A_{>0},
$$
and
$$
\re(a(p^\nu)e^{-i\phi})<0 \quad \text{if and only if}\quad \theta_p\in A_{<0},
$$
where $A_{\lessgtr 0}$ be defined as in the proof of Lemma \ref{lem:2}. From Theorem \ref{thmCMST} and \ref{thmST}, we know that the sequence $\{\theta_p\}_{p,\varepsilon(p)=\zeta^2}$ is equidistributed in $[0,\pi]$ with respect to the measure $\mu$. Thereby, there are infinitely many primes $p$ satisfying $\varepsilon(p)=\zeta^2$ such that $\theta_p\in A_{>0}$, and infinitely many primes $p$ satisfying $\varepsilon(p)=\zeta^2$ such that $\theta_p\in A_{<0}$. Hence the sequence $\{\re(a(p^\nu)e^{-i\phi})\}_{p,\varepsilon(p)=\zeta^2}$ changes sign infinitely often.

\item[\textbf{Case 2}: $\mathrm{arg}(\zeta^\nu)\equiv \phi\pm\frac{\pi}{2}\pmod{2\pi}$.] The sequence $\{\re(a(p^\nu) e^{-i\phi})\}_{p,\varepsilon(p)=\zeta^2}$ is trivial.
\end{description}

Summarizing, we have thus proved that for any root of unity $\zeta$ such that $\zeta^2\in\mathrm{Im}(\varepsilon)$, the sequence $\{\re(a(p^\nu)e^{-i\phi})\}_{p}$ when $p$ runs over primes satisfying $\varepsilon(p)=\zeta^2$, either changes sign infinitely often or is trivial. Accordingly, for each $\phi\in [0,\pi)$ either the sequence $\{\re(a(p^\nu)e^{-i\phi})\}_{p\in\mathbb{P}}$ is trivial or changes sign infinitely often.

It remains to calculate the natural density of the sets $P_{>0}(\phi,\nu)$ and $P_{<0}(\phi,\nu)$. Here we restrict ourselves to the case when $f$ is not of CM-type, as the argument is entirely similar to the CM situation. The key point here is to see that 
$$P_{>0}(\phi,\nu)=\coprod_{\mycom{\xi,r_\varepsilon\text{-th root of unity}}{\xi=\zeta^2,\re(\zeta^\nu e^{-i\phi})>0}}\mathbb{P}_{>0}(\zeta,\nu)\bigsqcup \coprod_{\mycom{\xi,r_\varepsilon\text{-th root of unity}}{\xi=\zeta^2,\re(\zeta^\nu e^{-i\phi})<0}}\mathbb{P}_{<0}(\zeta,\nu),$$
and
$$P_{<0}(\phi,\nu)=\coprod_{\mycom{\xi,r_\varepsilon\text{-th root of unity}}{\xi=\zeta^2,\re(\zeta^\nu e^{-i\phi})<0}}\mathbb{P}_{>0}(\zeta,\nu)\bigsqcup \coprod_{\mycom{\xi,r_\varepsilon\text{-th root of unity}}{\xi=\zeta^2,\re(\zeta^\nu e^{-i\phi})>0}}\mathbb{P}_{<0}(\zeta,\nu),$$
up to finitely many primes, where $\zeta=e^{\frac{\pi i j}{r_{\varepsilon}}}$, is so chosen that $1\le j \le r_{\varepsilon}$, when $r_{\varepsilon}$ is even, (note that there is a unique choice of $\zeta$ when $r_{\varepsilon}$ is odd). 

The above displayed formula combined with Lemma \ref{lem:2}, gives
\begin{eqnarray*}
\delta\left(P_{>0}(\phi,\nu)\right) &=& \!\!\lim_{x\to\infty}\sum_{\mycom{\xi,r_\varepsilon\text{-th root of unity}}{ \xi=\zeta^2 ,\re(\zeta^\nu e^{-i\phi})>0}}\!\!\!\!\!\dfrac{\pi_{>0}(x,\zeta)}{\pi(x)}+\lim_{x\to\infty}\sum_{\mycom{\xi,r_\varepsilon\text{-th root of unity}}{\xi=\zeta^2 ,\re(\zeta^\nu e^{-i\phi})<0}}\!\!\!\!\!\!\!\dfrac{\pi_{<0}(x,\zeta)}{\pi(x)},\\
  &=& \!\!\frac{1}{2}\sum_{\mycom{\xi,r_\varepsilon\text{-th root of unity}}{ \xi=\zeta^2,\re(\zeta^\nu e^{-i\phi})\neq 0}}\frac{1}{r_{\varepsilon}},\\
  &=&\!\!\frac{\delta(P_{\neq 0}(\phi,\nu))}{2}.
\end{eqnarray*}
In the same manner we can see that $\delta\left(P_{<0}(\phi,\nu)\right) =\frac{\delta(P_{\neq 0}(\phi,\nu))}{2}$, which concludes the proof.
\end{proof}

\sloppy Our next concern will be the oscillatory behavior of the sequence $\{a(p^\nu)\}_{\nu\in\mathbb{N}}$, and the equidistribution of signs of $\{\re(a(p^\nu)e^{-i\phi})\}_{\nu\in\mathbb{N}}$, for a fixed prime number $p$. 
\begin{thm}\label{thm:2}
Let $f\in S_k^{\mathrm{new}}(N,\varepsilon)$ be a normalized newform, and let
$$
f(z)=\sum_{n\ge 1}a(n)n^{(k-1)/2}q^n\quad z\in\mathcal{H}
$$ 
be its Fourier expansion at $\infty$. Then there exists a set $S$ of primes of density zero, such that the following holds: For every prime $p\notin S$, the sequence $\{a(p^\nu)\}_{\nu\in\mathbb{N}}$ is oscillatory, and for any $\phi\in [0,\pi)$ we have 
$$ 
\lim_{x\to\infty}\dfrac{\#\{\nu\le x\; :\; \re\{a(p^\nu)e^{-i\phi}\}\gtrless 0\}}{\#\{\nu\le x \; :\; \re\{a(p^\nu)e^{-i\phi}\} \neq 0\}}=\frac{1}{2}.
$$
\end{thm}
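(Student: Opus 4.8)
The plan is to fix a prime $p\nmid N$ and read the sign of $\re(a(p^\nu)e^{-i\phi})$ off the two elementary factors furnished by Lemma \ref{lem:1}. Choosing a root of unity $\zeta=\zeta_p$ with $\zeta^2=\varepsilon(p)$ and writing $a(p^\nu)=\frac{\sin((\nu+1)\theta_p)}{\sin\theta_p}\zeta^\nu$ with $\theta_p\in(0,\pi)$, the positivity of $\sin\theta_p$ gives
$$\mathrm{sgn}\bigl(\re(a(p^\nu)e^{-i\phi})\bigr)=\mathrm{sgn}\bigl(\sin((\nu+1)\theta_p)\bigr)\,\mathrm{sgn}\bigl(\re(\zeta^\nu e^{-i\phi})\bigr),$$
where the second factor depends only on $\nu$ modulo $r:=\mathrm{ord}(\zeta)$ and the first is governed by $(\nu+1)\theta_p\bmod 2\pi$. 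The whole argument will then be run one residue class modulo $r$ at a time.

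First I would pin down the density-zero set $S$. It should contain the finitely many primes dividing $N$ together with all primes for which $\theta_p/\pi\in\mathbb Q$, with the single exception, when $f$ has CM, of the primes with $\theta_p=\pi/2$ (these have density $\tfrac12$ and are handled separately below). The point is that $\{p:\theta_p/\pi\in\mathbb Q\}\setminus\{p:\theta_p=\pi/2\}$ has density $0$: if $e^{i\theta_p}$ is a root of unity then, being $\tilde\alpha_p/(\zeta p^{(k-1)/2})$ for a root $\tilde\alpha_p$ of $T^2-a(p)p^{(k-1)/2}T+\varepsilon(p)p^{k-1}$, it lies in a number field whose degree over $\mathbb Q$ is bounded solely in terms of $f$ (a quadratic extension of $K_f(\zeta_{2r_\varepsilon})$), so its order is bounded and $\theta_p$ falls in a \emph{fixed} finite set $\Theta_0\subset[0,\pi]$; then each fibre $\{p:\theta_p=\theta_0\}$ with $\theta_0\neq\pi/2$ has density $0$ by Theorems \ref{thmCMST} and \ref{thmST}, whose limiting measures have no atom away from $\pi/2$ (for even weight one may instead invoke the remark after Lemma \ref{lem:1}, which already gives outright finiteness). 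Finally, when $f$ is not of CM-type, $\{p:a(p)=0\}=\{p:\theta_p=\pi/2\}$ also has density $0$ by Theorem \ref{thmST} and is placed in $S$; hence $\delta(S)=0$.

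Now take $p\notin S$ with $\theta_p/\pi$ irrational and fix $c\in\mathbb Z/r\mathbb Z$. As $\nu$ runs through the class $\nu\equiv c\pmod r$, the numbers $(\nu+1)\theta_p$ form an arithmetic progression with common difference $r\theta_p$ and $r\theta_p/\pi$ is irrational, so by Weyl's equidistribution theorem $(\nu+1)\theta_p\bmod 2\pi$ is equidistributed in $\mathbb R/2\pi\mathbb Z$; in particular $\sin((\nu+1)\theta_p)\neq0$ always and $\mathrm{sgn}\bigl(\sin((\nu+1)\theta_p)\bigr)$ is $+1$ on a density-$\tfrac12$ subset and $-1$ on a density-$\tfrac12$ subset of the class. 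Let $\sigma_c:=\mathrm{sgn}\bigl(\re(\zeta^c e^{-i\phi})\bigr)$. If $\sigma_c=0$ for every $c$ — which forces $\zeta=\pm1$ and $\phi=\pi/2$, so that all $a(p^\nu)$ are real and $\re(a(p^\nu)e^{-i\phi})\equiv0$ — the sequence is trivial, hence oscillatory. Otherwise, on each class with $\sigma_c\neq0$ the sign of $\re(a(p^\nu)e^{-i\phi})$ equals $\sigma_c\,\mathrm{sgn}\bigl(\sin((\nu+1)\theta_p)\bigr)$, taking each value on a density-$\tfrac12$ subset, while on the classes with $\sigma_c=0$ it vanishes identically; summing over classes shows $\{a(p^\nu)\}_\nu$ changes sign infinitely often (hence, as this holds for all $\phi$, is oscillatory) and that $\#\{\nu\le x:\re(a(p^\nu)e^{-i\phi})\gtrless0\}\sim\tfrac12\,\#\{\nu\le x:\re(a(p^\nu)e^{-i\phi})\neq0\}$, which is exactly the asserted limit.

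What remains — and where I expect the main obstacle to lie — is, when $f$ has CM by $F$, the density-$\tfrac12$ set of primes $p$ inert in $F$, for which $\theta_p=\pi/2$ and Lemma \ref{lem:1} collapses to $a(p^{2\mu+1})=0$, $a(p^{2\mu})=(-\varepsilon(p))^\mu$. Here one is reduced to the purely periodic sequence $\re\bigl((-\varepsilon(p))^\mu e^{-i\phi}\bigr)=\cos\bigl(\mu\arg(-\varepsilon(p))-\phi\bigr)$, which must be analysed by a direct count over one period $d'=\mathrm{ord}(-\varepsilon(p))$: when $d'\ge3$ the $d'$ equally spaced points $(-\varepsilon(p))^\mu e^{-i\phi}$ on the unit circle cannot all lie in one closed half-plane, so both signs recur and the sequence is oscillatory, and when $d'$ is moreover even the rotation-by-$\pi$ symmetry of the configuration splits the non-zero terms evenly, giving proportion $\tfrac12$; the cases $d'\le2$ (i.e. $\varepsilon(p)=\pm1$) are checked by hand. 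The delicacy is precisely that this family is positive density and cannot be absorbed into $S$, and that the exact value $\tfrac12$ rests on this finite root-of-unity symmetry (which holds, for instance, whenever $r_\varepsilon$ is a power of $2$); the only other technical hurdle is the purely number-theoretic density estimate for $\{p:\theta_p/\pi\in\mathbb Q\}$ used in the second step, which is settled by the boundedness-of-degree argument above together with the equidistribution theorems of Section \ref{sec:1}.
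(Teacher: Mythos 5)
The route you take is genuinely different from the paper's and, in fact, cleaner in one respect: instead of handling the case $\theta_p/\pi\in\mathbb Q$ by a direct periodicity argument (the paper's Case~2), you absorb those primes into the exceptional set $S$ via the boundedness-of-degree argument, and then run Weyl equidistribution over residue classes mod $r=\mathrm{ord}(\zeta)$ for the irrational case. That part is correct and is essentially the same mechanism as the paper's Case~1, merely with a coarser modulus; and your argument that $\{p:\theta_p/\pi\in\mathbb Q,\ \theta_p\neq\pi/2\}$ has density zero (degree of $\mathbb Q(e^{i\theta_p})$ bounded in terms of $f$, so $e^{i\theta_p}$ can be a root of unity of only boundedly many orders, and each non-atomic fibre has density zero) is sound. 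So for non-CM $f$ your proof is complete.

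The difficulty you flag at the end is, however, a genuine gap and not just a ``delicacy,'' and it is worth being explicit about why. When $f$ has CM, the inert primes $p$ give $\theta_p=\pi/2$, a positive-density family with $a(p^{2\mu+1})=0$ and $a(p^{2\mu})=(-\varepsilon(p))^{\mu}$. Writing $\omega:=-\varepsilon(p)$ and $d':=\mathrm{ord}(\omega)$, the nonzero values of $\re\bigl(a(p^{\nu})e^{-i\phi}\bigr)$ are the real parts of the $d'$ points $\omega^{\mu}e^{-i\phi}$, each hit with frequency $1/d'$. Your observation that the desired $1/2$ follows when $d'$ is even is right (pairing $\mu\leftrightarrow\mu+d'/2$), but for $d'$ odd $\ge 3$ the ratio is in general \emph{not} $1/2$: for example $\omega=e^{2\pi i/3}$, $\phi=0$ gives one positive and two negative real parts per period, hence the limit is $1/3$; for $\omega=e^{2\pi i/5}$ the limit is $3/5$. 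Even the ``oscillatory'' claim fails at $d'=1$ (i.e.\ $\varepsilon(p)=-1$, $a(p)=0$): the nonzero terms are then all equal to $\cos\phi$, which never changes sign. Since $\omega$ can have odd order $\ge 3$ (e.g.\ whenever $6\mid r_{\varepsilon}$, taking $\varepsilon(p)$ a primitive sixth root of unity), and these primes carry positive density and cannot be pushed into $S$, the statement as formulated is actually \emph{false} for such CM $f$, not merely unproved. Your suggestion that a hypothesis such as ``$r_{\varepsilon}$ is a power of $2$'' (which forces $d'$ to be $2$ or a power of $2$, hence even) would rescue the statement is correct.

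It is worth noting that the paper's own Case~2 does not resolve this either: in passing from $\sin\!\bigl(2\pi(\nu+1)n/m\bigr)\cos(\pi j\nu/r_{\varepsilon}-\phi)$ to the double count, the paper silently replaces the factor $\sin\!\bigl(2\pi(\nu+1)n/m\bigr)$ by $\sin(2\pi\nu n/m)$ while keeping the cosine factor indexed by $\nu$, and then asserts without argument that the positive and negative counts are equal (i.e.\ that $\sum_{d:c_d>0}(N_d^+-N_d^-)=\sum_{d:c_d<0}(N_d^+-N_d^-)$, where $N_d^{\pm}$ counts the signs of $s_{d,\ell}$). Running the uncorrected formula in the example above ($m=4$, $t_{\varepsilon}=12$, $\omega$ of order $3$) yields $1/2$, while the correct count yields $1/3$, so the $\nu\mapsto\nu+1$ shift is precisely what masks the failure. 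In short: your proof isolates exactly the right obstruction, and that obstruction is fatal for general CM $f$; the paper's argument does not avoid it.
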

\begin{rem}
\begin{enumerate}
\item The sequence $\{a(p^\nu)\}_{\nu\in\mathbb{N}}$ is oscillatory means that the sequence escape infinitely often from any half-plane. Hence, it improves the result of the author in \cite[Theorem 2.1]{Amri}.
\item It is worth pointing out that the theorem holds for prime in $S$, under some further restrictions, which can be easily deduced from the techniques of our proof. 
\end{enumerate}
\end{rem}
\begin{proof}[Proof of Theorem \ref{thm:2}] Set
$$
S :=\coprod_{\xi\in\mathrm{Im}(\varepsilon)}\left\{p\in\mathbb{P} : \varepsilon(p)=\xi, \theta_p=0\:\text{or}\:\pi\right\},
$$
where $\theta_p$ is defined as in \eqref{eq7}. By Theorem \ref{thmCMST} and \ref{thmST} we see that the set $S$ has density zero. Let $p$ be a prime outside $S$, then there exists a root of unity $\zeta$ satisfying $\zeta^2\in\mathrm{Im}(\varepsilon)$, such that $\varepsilon(p)=\zeta^2$. From Lemma \ref{lem:1}, one can write
$$
\re(a(p^\nu)e^{-i\phi})=\dfrac{\sin((\nu+1)\theta_p)}{\sin\theta_p}\re(\zeta^\nu e^{-i\phi}),
$$ 
for some $\theta_p\in(0,\pi)$. We derive the oscillatory behavior of the sequence $\{a(p^\nu))\}_{\nu\in\mathbb{N}}$, from the well-known behavior of the sequence $\left\{\sin((\nu+1)\theta_p)\right\}_{\nu\in\mathbb{N}}$.

We are left with the task of studying the equidistribution of signs of $\{\re(a(p^\nu)e^{-i\phi})\}_{\nu\in\mathbb{N}}$, to this end, write $\re(\zeta^\nu e^{-i\phi})=\cos\left(\frac{\pi j\nu}{r_{\varepsilon}}-\phi\right)$, if $r_{\varepsilon}$ is even, and $\re(\zeta^\nu e^{-i\phi})=\cos\left(\frac{2\pi j\nu}{r_{\varepsilon}}-\phi\right)$, if  $r_{\varepsilon}$ is odd, for some $1\le j\le r_{\varepsilon}$.  We treat only the former case, the second one being completely similar. We need to distinguish the following two cases.
\begin{description}
\item[\textbf{Case 1}: $\frac{\theta_p}{2\pi}$ is irrational.] Write
$$
\re(a(p^{\nu})e^{-i\phi})=(\sin\theta_p)^{-1}\sin\left(2\pi \left<\!\frac{(\nu+1)\theta_p}{2\pi}\!\right>\right)\cos\left(\frac{\pi j\nu}{r_{\varepsilon}}-\phi\right),
$$ 
where $\left<\!\frac{(\nu+1)\theta_p}{2\pi}\!\right>$ denotes the fractional part of $\frac{(\nu+1)\theta_p}{2\pi}$. Note that the sequence $\left\{\cos\left(\frac{\pi j\nu}{r_{\varepsilon}}-\phi\right)\right\}_{\nu\in\mathbb{N}}$ is $t_{\varepsilon}$-periodic and takes only finitely many different values, with $t_{\varepsilon}=2r'_{\varepsilon}$ if $\frac{j}{(r_{\varepsilon},j)}$ is odd, and $t_{\varepsilon}=r'_{\varepsilon}$ if $\frac{j}{(r_{\varepsilon},j)}$ is even, where $r'_\varepsilon=\frac{r_{\varepsilon}}{(r_{\varepsilon},j)}$. So, one may split the total range for $\nu$ into different arithmetic progressions $d\pmod{ t_{\varepsilon}}$ where $1\le d \le t_{\varepsilon}$, so that when $\nu$ runs through each of these arithmetic progressions the $\cos$-factor becomes constant and takes always the same value, say $c_d$. Accordingly we may write
$$
\#\{\nu\le x : \re(a(p^{\nu})e^{-i\phi})>0\}=\sum_{\mycom{d=1}{c_d>0}}^{t_{\varepsilon}}\sum_{\mycom{\nu\le x, \nu\equiv d \!\!\!\!\!\pmod {t_{\varepsilon}}}{\sin\left(2\pi\left<\frac{\nu \theta_p}{2\pi}\right>\right)>0}}1+ \sum_{\mycom{d=1}{c_d<0}}^{t_{\varepsilon}}\sum_{\mycom{\nu\le x, \nu\equiv d \!\!\!\!\!\pmod {t_{\varepsilon}}}{\sin\left(2\pi\left<\frac{\nu \theta_p}{2\pi}\right>\right)<0}}1,
$$
and
$$
\#\{\nu\le x : \re(a(p^{\nu})e^{-i\phi})<0\}=\sum_{\mycom{d=1}{c_d>0}}^{t_{\varepsilon}}\sum_{\mycom{\nu\le x, \nu\equiv d \!\!\!\!\!\pmod {t_{\varepsilon}}}{\sin\left(2\pi\left<\frac{\nu \theta_p}{2\pi}\right>\right)<0}}1+ \sum_{\mycom{d=1}{c_d<0}}^{t_{\varepsilon}}\sum_{\mycom{\nu\le x, \nu\equiv d \!\!\!\!\!\pmod {t_{\varepsilon}}}{\sin\left(2\pi\left<\frac{\nu \theta_p}{2\pi}\right>\right)>0}}1.
$$ 
On the other hand we have
\begin{eqnarray*}
\lim_{x\to\infty}\frac{1}{x}\sum_{\mycom{\nu\le x, \nu\equiv d \!\!\!\!\!\pmod {t_{\varepsilon}}}{\sin\left(2\pi\left<\frac{\nu \theta_p}{2\pi}\right>\right)>0}}1 &=& \lim_{x\to\infty}\frac{1}{x}\dsum_{\mycom{\nu\le x, \nu\equiv d \!\!\!\!\!\pmod {t_{\varepsilon}}}{\sin\left(2\pi\left<\frac{\nu \theta_p}{2\pi}\right>\right)\in [0,1]}} 1,\\
 &=&  \lim_{x\to\infty}\frac{1}{x}\dsum_{\mycom{\nu\le x, \nu\equiv d \!\!\!\!\!\pmod {t_{\varepsilon}}}{2\pi\left<\frac{\nu \theta_p}{2\pi}\right>\in \left[0,\pi\right]}} 1,\\
  &=& \frac{1}{2t_\varepsilon},
 \end{eqnarray*}
\sloppy where we used the fact that the sequence $\left\{\left<\frac{\nu \theta_p}{2\pi}\right>\right\}_{\nu\in\mathbb{N}}$ is uniformly distributed $\pmod 1$ in $[0,1]$ in the last step, (Weyl's equidistribution theorem, see, e.g., \cite{Kuipers}[Example 2.1, pp.8]). Similarly we have
$$
\lim_{x\to\infty}\frac{1}{x}\sum_{\mycom{\nu\le x, \nu\equiv d \!\!\!\!\!\pmod {t_{\varepsilon}}}{\sin\left(2\pi\left<\frac{\nu \theta_p}{2\pi}\right>\right)<0}}1=\frac{1}{2t_{\varepsilon}}.
$$
It follows
$$
\lim_{x\to\infty}\dfrac{\#\{\nu\le x : \re(a(p^{\nu})e^{-i\phi})\gtrless 0\}}{x}=\sum_{\mycom{d=1}{c_d\ne 0}}^{t_{\varepsilon}}\frac{1}{2 t_{\varepsilon}},
$$
and therefore 
$$
\lim_{x\to\infty}\dfrac{\#\{\nu\le x : \re(a(p^{\nu})e^{-i\phi})\gtrless 0\}}{\#\{\nu\le x : \re(a(p^{\nu})e^{-i\phi})\ne 0\}}=\frac{1}{2}.
$$
\item[\textbf{Case 2}: $\frac{\theta_p}{2\pi}=\frac{n}{m}\in (0,\frac{1}{2})$ is rational, where $m$ and $n$ are coprime.] Write
$$
\re(a(p^{\nu})e^{-i\phi})=(\sin\theta_p)^{-1}\sin\left(\frac{2\pi(\nu+1) n}{m}\right)\cos\left(\frac{\pi j\nu}{r_{\varepsilon}}-\phi\right).
$$
By similar considerations as in the previous case, we have
$$
\#\{\nu\le x : \re(a(p^{\nu})e^{-i\phi})>0\}=\sum_{\mycom{d=1}{c_d>0}}^{t_{\varepsilon}}\sum_{\mycom{\nu\le x, \nu\equiv d \!\!\!\!\!\pmod {t_{\varepsilon}}}{\sin\left(\frac{2\pi\nu n}{m}\right)>0}}1+ \sum_{\mycom{d=1}{c_d<0}}^{t_{\varepsilon}}\sum_{\mycom{\nu\le x, \nu\equiv d \!\!\!\!\!\pmod {t_{\varepsilon}}}{\sin\left(\frac{2\pi\nu n}{m}\right)<0}}1,
$$
and
$$
\#\{\nu\le x : \re(a(p^{\nu})e^{-i\phi})<0\}=\sum_{\mycom{d=1}{c_d>0}}^{t_{\varepsilon}}\sum_{\mycom{\nu\le x, \nu\equiv d \!\!\!\!\!\pmod {t_{\varepsilon}}}{\sin\left(\frac{2\pi\nu n}{m}\right)<0}}1+ \sum_{\mycom{d=1}{c_d<0}}^{t_{\varepsilon}}\sum_{\mycom{\nu\le x, \nu\equiv d \!\!\!\!\!\pmod {t_{\varepsilon}}}{\sin\left(\frac{2\pi\nu n}{m}\right)>0}}1.
$$ 
Thus, the study of the distribution of signs of $\re(a(p^{\nu})e^{-i\phi})$, turned out to study the distribution of signs of the sequence $\left\{\sin\left(\frac{2\pi\nu n}{m}\right)\right\}_{\nu}$ when $\nu$ runs through an arithmetic progression $d\pmod{t_\varepsilon}$. Note that this sequence is $t_{\varepsilon}m'$-periodic and takes only finitely many different values,  where $m'=\frac{m}{(m,t_\varepsilon)}$. Thus, we can split this arithmetic progression into $m'$ different sub-arithmetic progressions $d+t_{\varepsilon}\ell\pmod{m't_\varepsilon}$ where $1\le\ell\le m'$, such that when $\nu$ runs through each of these sub-arithmetic progressions the sequence becomes constant and has always the same value, say $s_{d,\ell}$. Consequently
\begin{eqnarray*}
\nonumber \lim_{x\to\infty}\frac{1}{x}\sum_{\mycom{\nu\le x, \nu\equiv d \!\!\!\!\!\pmod {t_{\varepsilon}}}{\sin\left(\frac{2\pi \nu n}{m}\right)>0}}1 &=& \lim_{x\to\infty}\frac{1}{x}\sum_{\mycom{\ell=1}{s_{d,\ell}>0}}^{m'}\sum_{\mycom{\nu\le x, \nu\equiv d \!\!\!\!\!\pmod {t_{\varepsilon}}}{\frac{\nu-d}{t_\varepsilon}\equiv \ell \!\!\!\!\!\pmod{m'}}}1, \\  
&=& \lim_{x\to\infty}\frac{1}{x}\sum_{\mycom{\ell=1}{s_{d,\ell}>0}}^{m'}\sum_{\mycom{\nu\le x}{\nu\equiv d+t_\varepsilon\ell \!\!\!\!\!\pmod {t_{\varepsilon}m'}}}1,\\ 
&=& \sum_{\mycom{\ell=1}{s_{d,\ell}>0}}^{m'}\frac{1}{t_{\varepsilon}m'}.
\end{eqnarray*}
Similarly, we obtain 
$$
\lim_{x\to\infty}\frac{1}{x}\sum_{\mycom{\nu\le x, \nu\equiv d \!\!\!\!\!\pmod {t_{\varepsilon}}}{\sin\left(\frac{2\pi \nu n}{m}\right)<0}}1=\sum_{\mycom{\ell=1}{s_{d,\ell}<0}}^{m'}\frac{1}{t_{\varepsilon}m'}.
$$
Therefore
$$
\lim_{x\to\infty}\dfrac{\#\{\nu\le x : \re(a(p^{\nu})e^{-i\phi})\gtrless 0\}}{x}=\sum_{\mycom{d=1}{c_d\ne 0}}^{t_{\varepsilon}}\sum_{\mycom{\ell=1}{s_{d,\ell}\ne 0}}^{m'}\frac{1}{2t_{\varepsilon}m'},
$$
and hence
$$
\lim_{x\to\infty}\dfrac{\#\{\nu\le x : \re(a(p^{\nu})e^{-i\phi})\gtrless 0\}}{\#\{\nu\le x : \re(a(p^{\nu})e^{-i\phi})\ne 0\}}=\frac{1}{2},
$$
as desired.
\end{description}
This completes the proof of Theorem \ref{thm:2}.
\end{proof}

\section{Equidistribution of sign results of half-integral weight cuspidals eigenforms}\label{sec:3}
In this section, we shall present and prove our results concerning the oscillatory behavior and signs equidistribution of Fourier coefficients of cuspidal Hecke eigenforms following a similar philosophy to that in the previous section. In order to state our results, we need to develop some notations and make some assumptions for this section. 

\begin{hyp}\label{hyp:2}
Let $N\ge 4$ be divisible by $4$ and $k\ge 1$ be a natural number. Fix any Dirichlet character $\varepsilon\pmod N$. Let $$f(z)=\sum_{n=1}^{\infty}a(n)q^n\quad z\in\mathcal{H},$$ be a non-zero cuspidal Hecke eigenform of half-integral weight $k+1/2$ and level $N$ with Dirichlet character $\varepsilon\pmod N$, and let  $t$ be a square-free integer such that $a(t)\ne 0$.  The Shimura correspondence \cite{Shi} lifts $f$ to a Hecke eigenform $\mathrm{Sh}_t(f)$ of weight $2k$ for the group $\Gamma_0(N/2)$ with character $\varepsilon^2$. Let us write
$$
\mathrm{Sh}_t(f)=\sum_{n\ge 1} A_t(n)q^n,
$$
for its expansion at $\infty$. For simplicity we assume that $a(t)=1.$ According to \cite{Shi}, the $n$-th Fourier coefficient of $\mathrm{Sh}_t(f)$ is given by 
\begin{equation}
A_t(n)=\sum_{d|n}\varepsilon_{t,N}(d)d^{k-1}a\left(\frac{n^2}{d^2}t\right),\label{eq8}
\end{equation}
where $\varepsilon_{t,N}$ denotes the character $\varepsilon_{t,N}(d):=\varepsilon(d)\left(\frac{(-1)^{k}N^{2}t}{d}\right)$, we let $\chi_0(d):=\left(\frac{(-1)^{k}N^{2}t}{d}\right)$.  If $\mathrm{Sh}_t(f)$ has complex multiplication by an imaginary quadratic field $F$ denote by $d_F$ its fundamental discriminant. We suppose that $(r_{\varepsilon},d_F)=1$ and the fields $F_\varepsilon$ and $\mathbb{Q}(\sqrt{(-1)^k t})$ are linearly disjoint over $\mathbb{Q}$, where $F_{\varepsilon}$ is the field obtained by adjoining to $F$ the values of $\varepsilon$. We let $\chi_F$ be the quadratic character associated to $F$.
\end{hyp}

Let $\zeta$ be a root of unity such that $\zeta\in\mathrm{Im}(\varepsilon)$, if $p$ is a prime number satisfying $\varepsilon(p)=\zeta,$ then we have
$$B_\zeta(p):=\frac{A_t(p)}{2p^{k-1/2}\zeta}\in [-1,1].$$
By \eqref{eq8} we have $a(tp^2)=A_t(p)-\varepsilon_{t,N}(p)p^{k-1}$, and hence 
\begin{equation}\label{eq9}
\frac{a(tp^2)}{2p^{k-1/2}\zeta}=B_{\zeta}(p)-\frac{\chi_0(p)}{2\sqrt{p}}.
\end{equation}

For abbreviation  we let $A_\zeta(p)$ stand for $\frac{a(tp^2)}{2p^{k-1/2}\zeta}$.
Set
$$P_{> 0}(\phi):=\{p\in\mathbb{P}\; :\; \re(a(tp^2)e^{-i\phi})> 0\},$$

$$P_{< 0}(\phi):=\{p\in\mathbb{P}\; :\; \re(a(tp^2)e^{-i\phi})< 0\},$$
and 
$$P_{\neq 0}(\phi):=\{p\in\mathbb{P}\; :\; \re(a(tp^2)e^{-i\phi})\neq 0\},$$
where $\phi$ is a real number belonging to $[0,\pi)$.
\begin{thm}\label{thm:3}
Let $f\in S_{k+1/2}(N,\varepsilon)$ be a cuspidal Hecke eigenform  satisfying Hypothesis \ref{hyp:2}. Let us write 
$$
f(z)=\sum_{n\ge 1}a(n)q^n\quad z\in\mathcal{H},
$$
for its Fourier expansion at $\infty$. Then the sequence $\{a(tp^2)\}_{p\in\mathbb{P}}$ is oscillatory, and if moreover $\mathrm{Sh}_t(f)$ is not of CM-type, or of CM-type and $\chi_0\ne \chi_{\mathrm{triv}},\chi_F$, then for each $\phi\in[0,\pi)$ the sets $P_{>0}(\phi)$ and $P_{<0}(\phi)$ have equal positive natural density, that is, both are precisely half of the natural density of the set $P_{\ne 0}(\phi)$. In the remaining cases we have
$$
\delta(P_{>0}(\phi))=\left\{
    \begin{array}{llll}
        \dfrac{\delta(P_{\neq0}(\phi))}{4}+\dfrac{1}{2}\dsum_{\mycom{\zeta\in\mathrm{Im}(\varepsilon)}{\re(\zeta e^{-i\phi})>0}} \frac{1}{r_{\varepsilon}} &\mbox{if}\;\;\chi_0 =\chi_F, \\
        \dfrac{\delta(P_{\neq0}(\phi))}{4}+ \dfrac{1}{2}\dsum_{\mycom{\zeta\in\mathrm{Im}(\varepsilon)}{\re(\zeta e^{-i\phi})<0}} \frac{1}{r_{\varepsilon}} & \mbox{if}\;\;  \chi_0 =\chi_{\mathrm{triv}},
    \end{array}
\right.
$$
and
$$
\delta(P_{<0}(\phi))=\left\{
    \begin{array}{llll}
        \dfrac{\delta(P_{\neq0}(\phi))}{4}+\dfrac{1}{2}\dsum_{\mycom{\zeta\in\mathrm{Im}(\varepsilon)}{\re(\zeta e^{-i\phi})<0}} \frac{1}{r_{\varepsilon}} &\mbox{if}\;\;\chi_0 =\chi_F, \\
        \dfrac{\delta(P_{\neq0}(\phi))}{4}+ \dfrac{1}{2}\dsum_{\mycom{\zeta\in\mathrm{Im}(\varepsilon)}{\re(\zeta e^{-i\phi})>0}} \frac{1}{r_{\varepsilon}} & \mbox{if}\;\;  \chi_0 =\chi_{\mathrm{triv}}.
    \end{array}
\right.
$$
\end{thm}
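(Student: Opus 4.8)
The plan is to reduce everything to the integral-weight Shimura lift $g:=\mathrm{Sh}_t(f)$, a Hecke eigenform of weight $2k$ and character $\varepsilon^2$, and to exploit the exact identity \eqref{eq9}. Write $B_\zeta(p)=\frac{A_t(p)}{2p^{k-1/2}\zeta}$ for the normalized $p$-th Hecke eigenvalue of $g$, so that $A_\zeta(p)=\frac{a(tp^2)}{2p^{k-1/2}\zeta}=B_\zeta(p)-\frac{\chi_0(p)}{2\sqrt p}$ is real, $a(tp^2)=2p^{k-1/2}\zeta A_\zeta(p)$, and hence $\re(a(tp^2)e^{-i\phi})=2p^{k-1/2}\,\re(\zeta e^{-i\phi})\,A_\zeta(p)$; thus the sign of $\re(a(tp^2)e^{-i\phi})$ is the product of the constant $\mathrm{sign}\,\re(\zeta e^{-i\phi})$ and $\mathrm{sign}\,A_\zeta(p)$. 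First I would run the proofs of Theorems \ref{thmCMST} and \ref{thmST} for $g$, but imposing the finer Chebotarev constraint $\varepsilon(p)=\zeta$ in place of $\varepsilon^2(p)=\zeta^2$ (legitimate: this is still a Chebotarev condition, and twisting the relevant symmetric-power or Hecke $L$-functions by the corresponding Dirichlet character does not affect their analytic properties), to conclude that $\{B_\zeta(p)\}$ is equidistributed on $[-1,1]$ with respect to $\mu_{\mathrm{ST}}$ if $g$ has no CM and with respect to $\mu_{\mathrm{CM}}$ otherwise, as $p$ ranges over primes with $\varepsilon(p)=\zeta$. Moreover $\{p:\varepsilon(p)=\zeta\}$ has density $1/r_\varepsilon$, and in the CM case (using $(r_\varepsilon,d_F)=1$, which makes $F$ and $\mathbb{Q}(\varepsilon)$ linearly disjoint) exactly half of these primes split in $F$ and half are inert, with $A_t(p)=0$ — hence $B_\zeta(p)=0$ — precisely on the inert ones.

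Next I would pin down $\mathrm{sign}\,A_\zeta(p)$. On the set where $B_\zeta(p)\ne 0$ the correction term $\frac{\chi_0(p)}{2\sqrt p}$ changes the sign of $A_\zeta(p)$ only when $|B_\zeta(p)|\le\frac1{2\sqrt p}$, and a routine truncation (bound small primes trivially and use $\mu(\{|t|\le\eta\})=O(\eta)$ for the rest, then let $\eta\to 0$) shows this set has density $0$; the same argument shows $\{p:A_\zeta(p)=0\}$ has density $0$. In the non-CM case we therefore have $\mathrm{sign}\,A_\zeta(p)=\mathrm{sign}\,B_\zeta(p)$ off a null set, and the symmetry $\mu_{\mathrm{ST}}((0,1])=\mu_{\mathrm{ST}}([-1,0))=\tfrac12$ gives $\delta(\{p:\varepsilon(p)=\zeta,\ A_\zeta(p)\gtrless 0\})=\tfrac1{2r_\varepsilon}$. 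In the CM case the split primes, governed by the continuous part $\tfrac1{2\pi}\frac{dt}{\sqrt{1-t^2}}$ of $\mu_{\mathrm{CM}}$, contribute $\tfrac1{4r_\varepsilon}$ to each sign; on the inert primes $B_\zeta(p)=0$ exactly, so $A_\zeta(p)=-\frac{\chi_0(p)}{2\sqrt p}$ has sign $-\chi_0(p)$, and: if $\chi_0=\chi_F$ then $\chi_0(p)=\chi_F(p)=-1$ there so all inert primes give $A_\zeta(p)>0$; if $\chi_0=\chi_{\mathrm{triv}}$ they all give $A_\zeta(p)<0$; otherwise $\chi_0$ is a nontrivial quadratic character distinct from $\chi_F$, so by the linear disjointness of $F_\varepsilon$ and $\mathbb{Q}(\sqrt{(-1)^kt})$ in Hypothesis \ref{hyp:2}, Chebotarev splits the inert primes evenly between $\chi_0(p)=\pm1$, contributing $\tfrac1{4r_\varepsilon}$ to each sign. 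Collecting cases, $\delta(\{p:\varepsilon(p)=\zeta,\ A_\zeta(p)>0\})$ equals $\tfrac1{2r_\varepsilon}$ in the non-CM case and in the generic CM case ($\chi_0\ne\chi_{\mathrm{triv}},\chi_F$), equals $\tfrac3{4r_\varepsilon}$ if $\chi_0=\chi_F$, and equals $\tfrac1{4r_\varepsilon}$ if $\chi_0=\chi_{\mathrm{triv}}$, with the complementary value $\tfrac1{r_\varepsilon}$ minus this for $A_\zeta(p)<0$.

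To conclude I would decompose, up to the finitely many primes dividing $N$, the set $P_{>0}(\phi)$ as the disjoint union over $\zeta\in\mathrm{Im}(\varepsilon)$ of $\{p:\varepsilon(p)=\zeta,\ A_\zeta(p)>0\}$ for those $\zeta$ with $\re(\zeta e^{-i\phi})>0$ and of $\{p:\varepsilon(p)=\zeta,\ A_\zeta(p)<0\}$ for those with $\re(\zeta e^{-i\phi})<0$ (the $\zeta$ with $\re(\zeta e^{-i\phi})=0$ contribute nothing), and similarly for $P_{<0}(\phi)$ with the two sign-conditions interchanged. Summing the densities from the previous step, together with $\delta(P_{\ne 0}(\phi))=\sum_{\re(\zeta e^{-i\phi})\ne 0}\tfrac1{r_\varepsilon}$, yields $\delta(P_{>0}(\phi))=\delta(P_{<0}(\phi))=\tfrac12\delta(P_{\ne 0}(\phi))$ in the non-CM and generic CM cases, and the two asymmetric formulas when $\chi_0=\chi_F$ or $\chi_0=\chi_{\mathrm{triv}}$ — the surplus $\tfrac12\sum_{\re(\zeta e^{-i\phi})\gtrless 0}\tfrac1{r_\varepsilon}$ being exactly the mass of the inert primes, which under those hypotheses all land on one side. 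The oscillatory statement comes out along the way: for any $\phi$ and any $\zeta$ with $\re(\zeta e^{-i\phi})\ne 0$, equidistribution of $\{B_\zeta(p)\}$ produces infinitely many split (or non-CM) primes $p$ with $\varepsilon(p)=\zeta$ on each side of $0$, hence $A_\zeta(p)$ takes both signs infinitely often and $\{\re(a(tp^2)e^{-i\phi})\}_p$ changes sign infinitely often; the only remaining possibility is that $\re(\zeta e^{-i\phi})=0$ for every $\zeta\in\mathrm{Im}(\varepsilon)$, which forces $r_\varepsilon\le 2$, $\phi=\pi/2$ and all $a(tp^2)\in\mathbb{R}$, so the sequence is then trivial.

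The step I expect to be the real obstacle is the first one: establishing the joint equidistribution of $\{B_\zeta(p)\}$ with the congruence $\varepsilon(p)=\zeta$, and, in the CM case, the independence of $\chi_0(p)$ from the pair ``$p$ inert in $F$'' and ``$\varepsilon(p)=\zeta$'' — which is precisely where the coprimality $(r_\varepsilon,d_F)=1$ and the linear disjointness hypothesis of Hypothesis \ref{hyp:2} are indispensable, and where the dichotomy leading to the four cases of the theorem is forced. The remaining analysis (sign transfer through the $O(p^{-1/2})$ correction, symmetry of $\mu_{\mathrm{ST}}$ and of the continuous part of $\mu_{\mathrm{CM}}$, and the bookkeeping over $\zeta$) is then entirely parallel to the proof of Theorem \ref{thm:1}.
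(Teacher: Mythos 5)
Your proposal is correct and follows the paper's route essentially step for step: reduce through the identity \eqref{eq9}, establish the per-$\zeta$ density lemma (the paper's Lemma \ref{lem:3}, with its four cases) by combining the Sato--Tate and CM equidistribution theorems with a truncation that kills the $O(p^{-1/2})$ correction term, and then assemble $P_{>0}(\phi)$, $P_{<0}(\phi)$ by summing over $\zeta\in\mathrm{Im}(\varepsilon)$ according to the sign of $\re(\zeta e^{-i\phi})$; the only cosmetic difference is that in the CM case you run the truncation argument directly where the paper instead cites \cite[Theorem 4.2.1 and Remark 4.2.2]{Arias}. It is to your credit that you flag the refinement of the Chebotarev condition: applying Theorems \ref{thmCMST} and \ref{thmST} to $\mathrm{Sh}_t(f)$ (whose nebentypus is $\varepsilon^2$) yields equidistribution under the coarser constraint $\varepsilon^2(p)=\zeta^2$, and passing to $\varepsilon(p)=\zeta$ requires the joint-equidistribution strengthening by character twists that you sketch, whereas the paper's equation \eqref{eq12} invokes the theorems under the finer condition without comment.
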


We shall need the following lemma.
\begin{lem}\label{lem:3}
We make the same assumptions as in Theorem \ref{thm:3}, and let $\zeta$ be a root of unity belonging to $\mathrm{Im}(\varepsilon)$. If we denote
 
$$\mathbb{P'}_{\gtrless0}(\zeta):=\left\{p\in\mathbb{P}\;:\; \varepsilon(p)=\zeta,\;A_\zeta(p)\gtrless0\right\}.$$
Then we have
$$
\delta(\mathbb{P'}_{>0}(\zeta))=\left\{
    \begin{array}{ll}
        \frac{1}{2r_{\varepsilon}} & \mbox{if}\;\; \mathrm{Sh}_t(f)\;\;  \mbox{is not of CM-type}, \\
        \frac{1}{2r_{\varepsilon}} & \mbox{if}\;\; \mathrm{Sh}_t(f)\;\;  \mbox{is of CM-type and}\;\; \chi_0\ne \chi_{\mathrm{triv}},\chi_F,\\
        \frac{1}{4r_{\varepsilon}} & \mbox{if}\;\; \mathrm{Sh}_t(f)\;\;  \mbox{is of CM-type and}\;\; \chi_0 =\chi_{\mathrm{triv}},\\
        \frac{3}{4r_{\varepsilon}} & \mbox{if}\;\; \mathrm{Sh}_t(f)\;\;  \mbox{is of CM-type and}\;\; \chi_0 =\chi_F,
    \end{array}
\right.
$$
and
$$
\delta(\mathbb{P'}_{<0}(\zeta))=\left\{
    \begin{array}{ll}
        \frac{1}{2r_{\varepsilon}} & \mbox{if}\;\; \mathrm{Sh}_t(f)\;\;  \mbox{is not of CM-type}, \\
        \frac{1}{2r_{\varepsilon}} & \mbox{if}\;\; \mathrm{Sh}_t(f)\;\;  \mbox{is of CM-type and}\;\; \chi_0\ne \chi_{\mathrm{triv}},\chi_F,\\
        \frac{3}{4r_{\varepsilon}} & \mbox{if}\;\; \mathrm{Sh}_t(f)\;\;  \mbox{is of CM-type and}\;\; \chi_0 =\chi_{\mathrm{triv}},\\
        \frac{1}{4r_{\varepsilon}} & \mbox{if}\;\; \mathrm{Sh}_t(f)\;\;  \mbox{is of CM-type and}\;\; \chi_0 =\chi_F.
    \end{array}
\right.
$$
\end{lem}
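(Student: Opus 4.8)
The plan is to read off the sign of $A_\zeta(p)$ from the identity \eqref{eq9}, namely $A_\zeta(p)=B_\zeta(p)-\frac{\chi_0(p)}{2\sqrt p}$, by comparing it with the sign of $B_\zeta(p)$. Since $\mathrm{Sh}_t(f)$ is a cuspidal Hecke eigenform of weight $2k$ and character $\varepsilon^2$, it is (a copy of) a normalized newform, so Theorems \ref{thmCMST} and \ref{thmST} apply to it. For $p$ with $\varepsilon(p)=\zeta$ we have $\varepsilon^2(p)=\zeta^2$, hence $A_t(p)/\zeta\in\mathbb{R}$ and $B_\zeta(p)\in[-1,1]$, so we may write $B_\zeta(p)=\cos\theta_p$ for a unique $\theta_p\in[0,\pi]$; moreover, in the refined form of the Sato--Tate theorems that also records the value $\varepsilon(p)$ (the joint equidistribution in the Sato--Tate group together with $\mathrm{Gal}(\mathbb{Q}(\varepsilon)/\mathbb{Q})$, as implicitly used in Section \ref{sec:2}), the sequence $\{\theta_p\}$ is equidistributed in $[0,\pi]$ with respect to $\mu=\mu_{\mathrm{ST}}$ if $\mathrm{Sh}_t(f)$ is not of CM-type and $\mu=\mu_{\mathrm{CM}}$ if it is, as $p$ runs over $\mathbb{P}_\zeta:=\{p\in\mathbb{P}:\varepsilon(p)=\zeta\}$, a set of density $1/r_\varepsilon$.

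Next I would reduce the distribution of $\mathrm{sign}(A_\zeta(p))$ to that of $\mathrm{sign}(B_\zeta(p))$. Since $\left|\tfrac{\chi_0(p)}{2\sqrt p}\right|\le\tfrac{1}{2\sqrt p}\to0$, an elementary case check gives the inclusion
$$
\{p\in\mathbb{P}_\zeta:A_\zeta(p)>0\}\ \triangle\ \Big(\{p\in\mathbb{P}_\zeta:B_\zeta(p)>0\}\cup\{p\in\mathbb{P}_\zeta:B_\zeta(p)=0,\ \chi_0(p)=-1\}\Big)\ \subseteq\ \{p\in\mathbb{P}_\zeta:0<|B_\zeta(p)|\le\tfrac{1}{2\sqrt p}\},
$$
and the analogous inclusion for $A_\zeta(p)<0$, with $B_\zeta(p)>0$ and $\chi_0(p)=-1$ replaced by $B_\zeta(p)<0$ and $\chi_0(p)=+1$. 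The key point is that the right-hand set has natural density zero: for any fixed $\epsilon>0$ the set $\{\theta:0<|\cos\theta|\le\epsilon\}$ has $\mu$-measure $O(\epsilon)$ (the possible atom of $\mu_{\mathrm{CM}}$ lies at $\theta=\pi/2$, precisely where $\cos\theta=0$, and is therefore excluded), so equidistribution of $\{\theta_p\}$ over $\mathbb{P}_\zeta$ shows $\{p\in\mathbb{P}_\zeta:0<|\cos\theta_p|\le\epsilon\}$ has upper density $O(\epsilon/r_\varepsilon)$; letting $\epsilon\downarrow0$ and noting $\tfrac{1}{2\sqrt p}<\epsilon$ for all but finitely many $p$ finishes this point. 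It follows that
$$
\delta(\mathbb{P'}_{>0}(\zeta))=\delta(\{p\in\mathbb{P}_\zeta:B_\zeta(p)>0\})+\delta(\{p\in\mathbb{P}_\zeta:B_\zeta(p)=0,\ \chi_0(p)=-1\}),
$$
and likewise for $\mathbb{P'}_{<0}(\zeta)$ with $B_\zeta(p)<0$ and $\chi_0(p)=+1$.

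It then remains to evaluate the two terms. Since $\{B_\zeta(p)\gtrless0\}=\{\theta_p\in[0,\pi/2)\}$ resp. $\{\theta_p\in(\pi/2,\pi]\}$, equidistribution gives the density $\mu([0,\pi/2))/r_\varepsilon$, which is $\tfrac{1}{2r_\varepsilon}$ when $\mathrm{Sh}_t(f)$ has no CM and $\tfrac{1}{4r_\varepsilon}$ when it does (the atom at $\pi/2$ contributing to neither side). For the second term, if $\mathrm{Sh}_t(f)$ has no CM then $\{B_\zeta(p)=0\}$ already has density zero and the term vanishes, giving the first line of the lemma. If $\mathrm{Sh}_t(f)$ has CM by $F$, then up to finitely many primes $\{p\in\mathbb{P}_\zeta:B_\zeta(p)=0\}=\{p\in\mathbb{P}_\zeta:A_t(p)=0\}=\{p\in\mathbb{P}_\zeta:\chi_F(p)=-1\}$ (the $p$-th coefficient of a CM newform vanishes exactly at the primes inert in its CM field, and these are precisely the primes carrying the atom of $\mu_{\mathrm{CM}}$, as in the proof of Theorem \ref{thmCMST}), and this set has density $\tfrac{1}{2r_\varepsilon}$ by the Chebotarev density theorem, using $(r_\varepsilon,d_F)=1$. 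On this set: if $\chi_0=\chi_{\mathrm{triv}}$ then $\chi_0(p)=1$ always, so the $\chi_0(p)=-1$ contribution is $0$ and the $\chi_0(p)=+1$ contribution is $\tfrac{1}{2r_\varepsilon}$; if $\chi_0=\chi_F$ then $\chi_0(p)=\chi_F(p)=-1$ on the set, so the two contributions are interchanged; and if $\chi_0\ne\chi_{\mathrm{triv}},\chi_F$, then the hypothesis that $\mathbb{Q}(\sqrt{(-1)^kt})$ and $F_\varepsilon$ are linearly disjoint makes the conditions $\varepsilon(p)=\zeta$, $\chi_F(p)=-1$, $\chi_0(p)=\pm1$ independent in the Chebotarev sense, so each of the two contributions equals $\tfrac{1}{r_\varepsilon}\cdot\tfrac12\cdot\tfrac12=\tfrac{1}{4r_\varepsilon}$. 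Adding the two terms in each of the four cases reproduces exactly the stated values of $\delta(\mathbb{P'}_{>0}(\zeta))$ and $\delta(\mathbb{P'}_{<0}(\zeta))$.

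I expect the main obstacle to be the density-zero estimate for $\{p\in\mathbb{P}_\zeta:0<|B_\zeta(p)|\le\tfrac{1}{2\sqrt p}\}$: one has to separate cleanly the exact vanishing $B_\zeta(p)=0$ (which has positive density in the CM case and is handled by the $\chi_0$-dichotomy) from the near-vanishing $0<|B_\zeta(p)|\le\tfrac{1}{2\sqrt p}$ (which must be shown negligible), and this requires a little care precisely because the atom of $\mu_{\mathrm{CM}}$ sits at the boundary point $\theta=\pi/2$, so the relevant sets are not $\mu$-continuity sets; one should argue through the open/closed (Portmanteau) form of equidistribution together with the exact density $\tfrac{1}{2r_\varepsilon}$ of $\{p\in\mathbb{P}_\zeta:\theta_p=\pi/2\}$. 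A secondary point requiring attention is the use of Theorems \ref{thmCMST} and \ref{thmST} for $\mathrm{Sh}_t(f)$ in the form that records $\varepsilon(p)$ rather than only $\varepsilon^2(p)$, i.e. the joint equidistribution in the Sato--Tate group together with $\mathrm{Gal}(\mathbb{Q}(\varepsilon)/\mathbb{Q})$.
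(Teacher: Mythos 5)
Your proof is correct and, at its core, takes the same route as the paper: compare $A_\zeta(p)$ with $B_\zeta(p)$ via \eqref{eq9}, argue that the perturbation $\chi_0(p)/(2\sqrt p)$ only matters on the $B_\zeta(p)=0$ locus, and then determine the sign there by $\chi_0(p)$ using Chebotarev and the linear-disjointness hypotheses. The difference is one of packaging rather than substance. The paper treats the non-CM case by the Inam--Wiese $\liminf/\limsup$ argument (the inclusion $\{p\le x:\varepsilon(p)=\zeta,\ B_\zeta(p)>\epsilon\}\subset\{p\le\frac{1}{4\epsilon^2}\}\cup\{p\le x:p\in\mathbb{P}'_{>0}(\zeta)\}$, then $\epsilon\downarrow 0$), and for the CM case it simply invokes \cite[Theorem 4.2.1 and Remark 4.2.2]{Arias} to get $\delta(T_I(\zeta))=\delta(S_I(\zeta))=\frac{1}{4r_\varepsilon}$ before sorting the $B_\zeta(p)=0$ locus by $\chi_0$. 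You instead run a single, unified symmetric-difference argument in both cases, showing directly that $\{p\in\mathbb{P}_\zeta:0<|B_\zeta(p)|\le\frac{1}{2\sqrt p}\}$ has density zero, which reproves the relevant piece of \cite{Arias} inline rather than citing it; this makes your proof more self-contained at the cost of having to handle the Portmanteau/continuity-set subtlety around the atom at $\theta=\pi/2$ by hand (you correctly note that the exact density $\frac{1}{2r_\varepsilon}$ of $\{\theta_p=\pi/2\}$ is what lets you subtract the atom cleanly). You also explicitly flag that Theorems \ref{thmCMST} and \ref{thmST} are being used for $\mathrm{Sh}_t(f)$ in a refined form that conditions on $\varepsilon(p)$ rather than merely on $\varepsilon^2(p)$; the paper uses the same refinement tacitly, so this is a point of care on your part rather than a divergence. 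The resulting four-case density table matches the lemma exactly.
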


\begin{proof}
Denote by $\pi_{>0}(x,\zeta):=\#\{p\le x\; :\;p\in\mathbb{P'}_{>0}(\zeta)\}$ and similarly $\pi_{\ge 0}(x,\zeta)$, $\pi_{<0}(x,\zeta)$ and $\pi_{\le0}(x,\zeta)$. 

First assume that $\mathrm{Sh}_t(f)$ is not of CM-type, we follow closely the method of \cite{IW}. Let $p$ be a prime satisfying $\varepsilon(p)=\zeta$, from \eqref{eq9} we have 
$$
A_\zeta(p)>0 \Longleftrightarrow B_\zeta(p)>\frac{\chi_0(p)}{2\sqrt{p}}.
$$
It follows that for any fixed $\epsilon >0$, we have the following inclusion of sets
$$\{p\leq x : \varepsilon(p)=\zeta, B_\zeta(p)>\epsilon\}\!\subset\!\{p\in\mathbb P : p\leq\frac{1}{4\epsilon^2}, \varepsilon(p)=\zeta\}\cup\{p\leq x : p\in\mathbb{P'}_{>0}(\zeta)\}.$$
Therefore 
\begin{equation}\label{eq10}
\pi_{>0}(x,\zeta)+\pi_{\zeta}\left(\frac{1}{4\epsilon^2}\right)\geq \#\{p\leq x :\varepsilon(p)=\zeta,\;\; B_\zeta(p)>\epsilon\},
\end{equation}
where $\pi_{\zeta}(x):=\#\{p\in\mathbb{P}: p\le x,\;\; \varepsilon(p)=\zeta\}$. Now dividing \eqref{eq10} by $\pi_{\zeta}(x)$ we obtain 
\begin{equation}\label{eq:10}
\frac{\pi_{>0}(x,\zeta)}{\pi_{\zeta}(x)}+\dfrac{\pi_{\zeta}\left(\frac{1}{4\epsilon^2}\right)}{\pi_{\zeta}(x)}\geq \dfrac{\#\{p\leq x :\varepsilon(p)=\zeta,\;\; B_\zeta(p)>\epsilon\}}{\pi_{\zeta}(x)}.
\end{equation}
Since $\pi_{\zeta}(x)\underset{x\to\infty}{\sim}\frac{x}{r_{\varepsilon}\log x}$, and the term $\pi_{\zeta}\left(\frac{1}{4\epsilon^2}\right)$ is finite, it follows
\begin{equation}\label{eq11}
\lim_{x\to\infty}\dfrac{\pi_{\zeta}\left(\frac{1}{4\epsilon^2}\right)}{\pi_{\zeta}(x)}=0.
\end{equation}
On the other hand, by Theorem \ref{thmST} we have 
\begin{equation}\label{eq12}
 \lim_{x\to\infty}\dfrac{\#\{p\leq x : \varepsilon(p)=\zeta,\;\; B_\zeta(p)>\epsilon\}}{\pi_{\zeta}(x)}=\mu_{\mathrm{ST}}([\epsilon,1]).
 \end{equation}

Taking into account \eqref{eq11} and \eqref{eq12} a passage to the limit in \eqref{eq:10} implies that
\begin{equation}\label{eq13}
\liminf_{x\to\infty}\frac{\pi_{>0}(x,\zeta)}{\pi_{\zeta}(x)}\geq \mu_{\mathrm{ST}}([\epsilon,1]).
\end{equation}
As the inequality \eqref{eq13} holds for all $\epsilon>0$, we have
$$\liminf_{x\to\infty}\frac{\pi_{>0}(x,\zeta)}{\pi_{\zeta}(x)}\geq \frac{1}{2}.$$
Similarly we get $\liminf\limits_{x\to\infty}\frac{\pi_{\le 0}(x,\zeta)}{\pi_{\zeta}(x)}\geq \frac{1}{2}$ and in view of $\pi_{\le 0}(x,\zeta)=\pi_\zeta(x)-\pi_{>0}(x,\zeta)$, one sees $\limsup\limits_{x\to\infty}\frac{\pi_{>0}(x,\zeta)}{\pi_{\zeta}(x)}\le\frac{1}{2}$. Consequently
$$
\lim\limits_{x\to\infty}\frac{\pi_{>0}(x,\zeta)}{\pi_{\zeta}(x)}=\frac{1}{2}.
$$
Since $\delta(\{p\in\mathbb{P} : \varepsilon(p)=\zeta\})=\frac{1}{r_{\varepsilon}}$, it follows
 $$\delta(\mathbb{P}'_{>0}(\zeta))=\lim_{x\to\infty}\frac{\pi_{>0}(x,\zeta)}{\pi(x)}=\frac{1}{2r_{\varepsilon}}.$$
Similarly we have $\delta(\mathbb{P}'_{<0}(\zeta))=\frac{1}{2r_{\varepsilon}}$.

Now, let us examine the CM situation, assume that $\mathrm{Sh}_t(f)$ has CM by an imaginary quadratic field $F$. Set $I=(0,1]$, $J=[-1,0)$,
$$
T_{I}(\zeta):=\{p\in\mathbb{P}: \varepsilon(p)=
\zeta, A_{\zeta}(p)\in I, B_\zeta(p)\ne 0\}, S_{I}(\zeta):=\{p\in\mathbb{P} : \varepsilon(p)=\zeta, B_{\zeta}(p)\in I\}
$$
and 
$$
 T_{J}(\zeta):=\{p\in\mathbb{P}: \varepsilon(p)=\zeta, A_{\zeta}(p)\in J, B_\zeta(p)\ne 0\}, S_{J}(\zeta):=\{p\in\mathbb{P} : \varepsilon(p)=\zeta, B_{\zeta}(p)\in J\}
$$

From Theorem \ref{thmCMST} we have $\delta(S_{I}(\zeta))=\delta(S_{J}(\zeta))=\frac{1}{4 r_\varepsilon}$. Thus in view of \cite[Theorem 4.2.1]{Arias} and Theorem \ref{thmCMST} it follows that $$\delta(T_{I}(\zeta))=\delta(T_{J}(\zeta))=\frac{1}{4 r_\varepsilon}.$$
Now by \cite[Remark 4.2.2]{Arias} we may write

$$
\mathbb{P'}_{>0}(\zeta)= T_{I}(\zeta)\sqcup 
$$
\begin{equation}\label{eq15}
\left(\{p\in\mathbb{P}: \varepsilon(p)=\zeta,B_\zeta(p)=0 \}\cap \left\{p\in\mathbb{P} : \varepsilon(p)=\zeta, \frac{-\chi_0(p)}{2\sqrt{p}}\in I\right\}\right),
\end{equation}
and 
$$
\mathbb{P'}_{<0}(\zeta)=T_{J}(\zeta)\sqcup
$$
\begin{equation}\label{eq16}
\left(\{p\in\mathbb{P}: \varepsilon(p)=\zeta,B_\zeta(p)=0 \}\cap \left\{p\in\mathbb{P} : \varepsilon(p)=\zeta, \frac{-\chi_0(p)}{2\sqrt{p}}\in J\right\}\right).
\end{equation}

In order to calculate $\delta(\mathbb{P'}_{<0}(\zeta))$ and $\delta(\mathbb{P'}_{>0}(\zeta))$, we distinguish the following three cases
\begin{description}
\item[\textbf{Case 1}: $\chi_0\ne \chi_{\mathrm{triv}}$ and $\chi_F$.] By our hypothesis we have $(r_{\varepsilon},d_F)=1$, and the fields $\mathbb{Q}(\sqrt{(-1)^k t})$ and $F_\varepsilon$ are linearly disjoint over $\mathbb{Q}$. Hence by Chebotarev's theorem the intersections in \eqref{eq15} and \eqref{eq16} have natural density $\frac{1}{4r_{\varepsilon}}$ and $\frac{1}{4r_{\varepsilon}}$ respectively. Consequently $\delta(\mathbb{P'}_{>0}(\zeta))=\frac{1}{2r_{\varepsilon}}$ and $\delta(\mathbb{P'}_{<0}(\zeta))=\frac{1}{2r_{\varepsilon}}$.

\item[\textbf{Case 2}: $\chi_0=\chi_{\mathrm{triv}}$.] Since $(r_{\varepsilon},d_F)=1$ the intersections in \eqref{eq15} and \eqref{eq16} have natural density $0$ and $\frac{1}{2r_{\varepsilon}}$ respectively, it follows that $\delta(\mathbb{P'}_{>0}(\zeta))=\frac{1}{4r_{\varepsilon}}$ and 
$\delta(\mathbb{P'}_{<0}(\zeta))=\frac{3}{4r_{\varepsilon}}$.

\item[\textbf{Case 3}: $\chi_0=\chi_F$.] By Hypothesis \ref{hyp:2} we have $(r_{\varepsilon},d_F)=1$. Thus, the intersections in \eqref{eq15} and \eqref{eq16} have natural density $\frac{1}{2r_{\varepsilon}}$ and $0$ respectively. It follows that $\delta(\mathbb{P'}_{>0}(\zeta))=\frac{3}{4r_{\varepsilon}}$ and $\delta(\mathbb{P'}_{<0}(\zeta))=\frac{1}{4r_{\varepsilon}}$.
\end{description}
which finishes the proof of Lemma \ref{lem:3}.
\end{proof}

We proceed now to prove Theorem \ref{thm:3}.
\begin{proof}[Proof of Theorem \ref{thm:3}]
Fix $\phi\in[0,\pi)$, and pick $\zeta$ a root of unity belonging to $\mathrm{Im}(\varepsilon)$. We first study the oscillatory behavior of the sequence $\{a(tp^2)\}_{p\in\mathbb{P}}$. To this end, two cases we shall need to consider.
\begin{description}

\item[\textbf{Case 1}: $\mathrm{arg}(\zeta)\not\equiv  \phi\pm\frac{\pi}{2}\pmod{2\pi}$.] The sequence $\{\re(a(tp^2)e^{-i\phi})\}_{p,\varepsilon(p)=\zeta}$ is not trivial, and we may write
$$
\re(a(tp^2)e^{-i\phi})=\frac{a(tp^2)}{\zeta}\re(\zeta e^{-i\phi}).
$$
By Theorem \ref{thmCMST} and \ref{thmST}, it follows that the sequence $\left(\frac{a(tp^2)}{\zeta}\right)_{p,\varepsilon(p)=\zeta}$
changes sign infinitely often. Hence the sequence $\{\re(a(tp^2)e^{-i\phi})\}_{p}$ changes sign infinitely often as $p$ varies over primes satisfying $\varepsilon(p)=\zeta$.
\item[\textbf{Case 2}: $\mathrm{arg}(\zeta)\equiv  \phi\pm\frac{\pi}{2}\pmod{2\pi}$.] The sequence $\{\re(a(tp^2)e^{-i\phi})\}_p$ is trivial when $p$ runs over primes satisfying $\varepsilon(p)=\zeta$.
\end{description}

What we have just proved is that for any root of unity $\zeta$ such that $\zeta\in\mathrm{Im}(\varepsilon)$, the sequence $\{\re(a(tp^2)e^{-i\phi})\}_{p}$ when $p$ runs over primes satisfying $\varepsilon(p)=\zeta$  either changes sign infinitely often or is trivial. Consequently, for each $\phi\in [0,\pi)$ either the sequence $\{\re(a(tp^2)e^{-i\phi})\}_{p\in\mathbb{P}}$
is trivial or changes sign infinitely often. 

For the purpose to calculate the density of the sets $P_{>0}(\phi)$ and $P_{<0}(\phi)$, we shall restrict ourselves to the case when $\mathrm{Sh}_t(f)$ is not of CM-type, as the argument in the CM situation is entirely analogous. First note that
$$P_{> 0}(\phi)=\coprod_{\mycom{\zeta,\text{root of unity}}{ \zeta\in\text{Im}(\varepsilon) ,\re(\zeta e^{-i\phi})>0}}\mathbb{P'}_{>0}(\zeta)\bigsqcup\coprod_{\mycom{\zeta,\text{root of unity}}{ \zeta\in\text{Im}(\varepsilon) ,\re(\zeta e^{-i\phi})<0}}\mathbb{P'}_{<0}(\zeta),$$
and
$$P_{< 0}(\phi)=\coprod_{\mycom{\zeta,\text{root of unity}}{\zeta\in\text{Im}(\varepsilon) ,\re(\zeta e^{-i\phi})<0}}\mathbb{P'}_{>0}(\zeta)\bigsqcup\coprod_{\mycom{\zeta,\text{root of unity}}{\zeta\in\text{Im}(\varepsilon) ,\re(\zeta e^{-i\phi})>0}}\mathbb{P'}_{<0}(\zeta),$$
up to finitely many primes. 

These, together with Lemma \ref{lem:3} yields
\begin{eqnarray*}
\delta\left(P_{>0}(\phi)\right) &=& \lim_{x\to\infty}\sum_{\mycom{\zeta,\text{root of unity}}{\zeta\in\text{Im}(\varepsilon) ,\re(\zeta e^{-i\phi})>0}}\dfrac{\pi_{>0}(x,\zeta)}{\pi(x)}+\lim_{x\to\infty}\sum_{\mycom{\zeta,\text{root of unity}}{\zeta\in\text{Im}(\varepsilon) ,\re(\zeta e^{-i\phi})<0}}\dfrac{\pi_{<0}(x,\zeta)}{\pi(x)},\\
  &=& \frac{1}{2}\sum_{\mycom{\zeta,\text{root of unity}}{\zeta\in\mathrm{Im}(\varepsilon) ,\re(\zeta e^{-i\phi})\neq 0}}\frac{1}{r_{\varepsilon}},\\
  &=&\frac{\delta(P_{\neq 0}(\phi))}{2}.
\end{eqnarray*}
In a similar way, it can be shown that $\delta\left(P_{<0}(\phi)\right) =\frac{\delta(P_{\neq 0}(\phi))}{2}$, which concludes the proof.
\end{proof}

Our next objective is to investigate the oscillatory behavior of the sequence $\{a(tp^{2\nu})\}_{\nu\in\mathbb{N}}$. We shall prove the following.
\begin{thm}\label{thm:4}
Let $f\in S_{k+1/2}(N,\varepsilon)$ be a cuspidal Hecke eigenform of half integral weight, and 
$$
f(z)=\sum_{n\ge 1}a(n)q^n\quad z\in \mathcal{H}
$$
its expansion at $\infty$. Let $t$ be a square-free integer such that $a(t)\ne 0$. For all but finitely many  primes $p$ the sequence $\{a(tp^{2\nu})\}_{\nu\in\mathbb{N}}$ is oscillatory.
\end{thm}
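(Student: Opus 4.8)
The plan is to reduce the study of $\{a(tp^{2\nu})\}_{\nu}$, for a fixed prime $p$, to an explicit trigonometric expression --- exactly as Lemma \ref{lem:1} does for integral weight --- and then to read off the oscillation as in the proof of Theorem \ref{thm:2}. First I would record the local generating identity coming from Shimura's relation \eqref{eq8}: since $\mathrm{Sh}_t(f)$ is a Hecke eigenform of integral weight $2k$ and nebentypus $\varepsilon^2$, one has $\dsum_{\nu\ge 0}A_t(p^\nu)X^\nu=\bigl(1-A_t(p)X+\varepsilon(p)^2p^{2k-1}X^2\bigr)^{-1}$ for $p\nmid N$, while \eqref{eq8} applied to $n=p^\nu$ reads precisely $\dsum_{\nu\ge 0}a(tp^{2\nu})X^\nu=\bigl(1-\varepsilon_{t,N}(p)p^{k-1}X\bigr)\dsum_{\nu\ge 0}A_t(p^\nu)X^\nu$; combining these,
$$
\dsum_{\nu\ge 0}a(tp^{2\nu})X^\nu=\frac{1-\varepsilon_{t,N}(p)p^{k-1}X}{1-A_t(p)X+\varepsilon(p)^2p^{2k-1}X^2}.
$$
For a prime $p$ with $\varepsilon(p)=\zeta$ write $A_t(p)=2p^{k-1/2}\zeta\cos\theta_p$ as in the text; since $\mathrm{Sh}_t(f)$ has even weight $2k\ge 2$ and, for $k=1$, is a genuine cusp form (we work in the orthogonal complement of the unary theta series), Deligne's theorem gives the factorization $1-A_t(p)X+\varepsilon(p)^2p^{2k-1}X^2=(1-\alpha_pX)(1-\beta_pX)$ with $\alpha_p,\beta_p=p^{k-1/2}\zeta\,e^{\pm i\theta_p}$. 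A partial-fraction expansion as in Lemma \ref{lem:1} then yields, whenever $\theta_p\in(0,\pi)$,
$$
a(tp^{2\nu})=\zeta^\nu p^{\nu(k-1/2)}\,\frac{\sin((\nu+1)\theta_p)-\chi_0(p)p^{-1/2}\sin(\nu\theta_p)}{\sin\theta_p}=\frac{\rho_p}{\sin\theta_p}\,p^{\nu(k-1/2)}\,\zeta^\nu\sin(\nu\theta_p+\psi_p),
$$
where $\rho_p e^{i\psi_p}:=e^{i\theta_p}-\chi_0(p)p^{-1/2}$; note $\rho_p\ge 1-p^{-1/2}>0$, and $\rho_p\sin\psi_p=\sin\theta_p\ne 0$, so $\psi_p\notin\pi\mathbb{Z}$.

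Next I would discard a finite exceptional set of primes: those dividing $2Nt$ (so that $\chi_0(p)=\pm 1$ and the displayed denominator is the genuine Euler factor), together with those for which $\theta_p\in\{0,\pi\}$. As in the Remark following Lemma \ref{lem:1}, $\theta_p\in\{0,\pi\}$ would force $\sqrt{p}$ into the fixed number field obtained by adjoining to $\mathbb{Q}$ the Fourier coefficients of $\mathrm{Sh}_t(f)$ and the finitely many values of $\varepsilon$, which holds for at most finitely many primes $p$. For every remaining $p$ and every $\phi\in[0,\pi)$,
$$
\re\bigl(a(tp^{2\nu})e^{-i\phi}\bigr)=\frac{\rho_p}{\sin\theta_p}\,p^{\nu(k-1/2)}\sin(\nu\theta_p+\psi_p)\,\cos\bigl(\nu\arg\zeta-\phi\bigr),
$$
so the sign of $\re(a(tp^{2\nu})e^{-i\phi})$ is the product of the signs of $\sin(\nu\theta_p+\psi_p)$ and of $\cos(\nu\arg\zeta-\phi)$.

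Third, I would run the dichotomy of the proof of Theorem \ref{thm:2}, now applied to the two-factor sequence above. The factor $\{\cos(\nu\arg\zeta-\phi)\}_\nu$ is periodic with period $T$ dividing $2r_\varepsilon$ (as $\zeta\in\mathrm{Im}(\varepsilon)$) and takes finitely many values; if it vanishes identically the sequence $\{\re(a(tp^{2\nu})e^{-i\phi})\}_\nu$ is trivial, hence oscillatory. Otherwise fix a residue $d$ with $\cos(d\arg\zeta-\phi)\ne 0$ and restrict $\nu$ to the progression $\nu\equiv d\pmod T$, on which the sign of $\re(a(tp^{2\nu})e^{-i\phi})$ is that of $\sin(\nu\theta_p+\psi_p)$ up to the fixed sign of $\cos(d\arg\zeta-\phi)$. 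If $\theta_p/2\pi$ is irrational, Weyl's equidistribution theorem applied to $\{\langle\nu\theta_p/2\pi\rangle\}$ along this progression produces infinitely many $\nu$ there with $\sin(\nu\theta_p+\psi_p)>0$ and infinitely many with $\sin(\nu\theta_p+\psi_p)<0$, hence infinitely many sign changes of $\re(a(tp^{2\nu})e^{-i\phi})$. If $\theta_p/2\pi=n/m$ is rational, the sign sequence is periodic and one checks that both signs occur within one period, using $\theta_p\in(0,\pi)$ (so that the equally spaced sample angles of the $\sin$-factor over a period are spread through an arc longer than $\pi$) together with $\psi_p\notin\pi\mathbb{Z}$.

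The main obstacle is precisely this rational case. Along a single progression the $\sin$-factor can collapse to a one-signed constant, and --- if $\psi_p$ happens to be a rational multiple of $\pi$ compatible with the modulus --- can even vanish identically there; one must then argue with several progressions simultaneously and rule out the possibility that the periodic cosine factor conspires to keep $\re(a(tp^{2\nu})e^{-i\phi})$ of one sign for all $\nu$. This is also where the CM situation for $\mathrm{Sh}_t(f)$ is hardest (then $\theta_p=\pi/2$ for a positive density of $p$, so $a(tp^{2\nu})$ moves between the real and the imaginary axes), and where, just as in passing from Theorem \ref{thm:1} to Theorem \ref{thm:3}, the quadratic characters $\chi_0$ and $\chi_F$ re-enter; it is for this reason that only the oscillation statement, and only for all but finitely many primes, is asserted, rather than the clean $\frac{1}{2}$-proportion analogue of Theorem \ref{thm:2}. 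Carefully delimiting this exceptional set of primes --- and, where $\mathrm{Sh}_t(f)$ has CM, using the conditions of Hypothesis \ref{hyp:2} that constrain $\chi_0$ relative to $\chi_F$ and to $\varepsilon$ --- is the technical heart of the argument.
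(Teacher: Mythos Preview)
Your setup and the explicit formula are correct, but the paper proves this theorem by a different and shorter device that bypasses the rational/irrational dichotomy entirely. After the same reduction $\re(a(tp^{2\nu})e^{-i\phi})=\dfrac{a(tp^{2\nu})}{\varepsilon(p)^\nu}\,\re(\varepsilon(p)^\nu e^{-i\phi})$ (the first factor being real), the paper shows that the real sequence $a(tp^{2\nu})/\varepsilon(p)^\nu$ changes sign infinitely often for all but finitely many $p$ by invoking Landau's theorem on Dirichlet series with eventually nonnegative coefficients. If that sequence were eventually of one sign, the series $\sum_{\nu\ge 0}a(tp^{2\nu})\varepsilon(p)^{-\nu}p^{-s\nu}$ would have a singularity at the real point of its line of convergence; but the generating identity shows its only poles are at $p^{s}=\alpha_p',\beta_p'$ with $\alpha_p'=\overline{\beta_p'}$ and $|\alpha_p'|=p^{k-1/2}$, so a real pole forces $\alpha_p'=\beta_p'=\pm p^{k-1/2}$, whence $\lambda_p=\pm 2p^{k-1/2}\varepsilon(p)$ and $\sqrt{p}\in K_f$, which can happen for only finitely many $p$. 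This single argument covers rational and irrational $\theta_p$, CM and non-CM, uniformly; no equidistribution and no Hypothesis~\ref{hyp:2} are used.

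Your irrational case via Weyl is fine, and in fact the sign-change of the real factor $\sin(\nu\theta_p+\psi_p)$ in the rational case is easier than you fear: its sum over one full period vanishes (geometric series) while $\sin\psi_p\ne 0$, so both signs occur infinitely often. What you correctly isolate as delicate is the passage from ``the real factor changes sign'' to ``the product with the periodic factor $\re(\varepsilon(p)^\nu e^{-i\phi})$ changes sign or is trivial''; the paper simply asserts this reduction and does not analyse it further, so your extended case analysis goes beyond what the paper actually carries out. Finally, your proposal to bring in the CM conditions of Hypothesis~\ref{hyp:2} is off target: Theorem~\ref{thm:4} carries no such hypothesis in its statement, and the paper's proof makes no use of it.
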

\begin{proof}
Applying the M\"obius inversion formula to \eqref{eq8}, we derive that
$$
a(tn^2)=\sum_{d |n} \mu(d)\varepsilon_{t,N}(d)d^{k-1} A_t\left(\frac{n}{d}\right).
$$
For $n=p^{\nu}$, with $\nu\in\mathbb{N}$ ($p\nmid N$ a prime), it follows that
\begin{equation}\label{eq18}
a(tp^{2\nu})=A_t(p^{\nu})-p^{k-1}\varepsilon_{t,N}(p) A_t(p^{\nu-1})
\end{equation}
Dividing \eqref{eq18} by $\varepsilon(p)^\nu$, we obtain
$$
\dfrac{a(tp^{2\nu})}{\varepsilon(p)^\nu}=\dfrac{A_t(p^{\nu})}{\varepsilon(p)^\nu}-\chi_0(p)p^{k-1}\dfrac{A_t(p^{\nu-1})}{\varepsilon(p)^{\nu-1}},
$$
hence $\frac{a(tp^{2\nu})}{\varepsilon(p)^\nu}\in\mathbb{R}$. Thus, we may write
$$
\re(a(tp^{2\nu})e^{-i\phi})=\frac{a(tp^{2\nu})}{\varepsilon(p)^\nu}\re(\varepsilon(p)^{\nu}e^{-i\phi}),
$$
for each $\phi\in[0,\pi)$. We shall have established the theorem if we prove that  for all but finitely many primes $p$ the sequence $\left(\frac{a(tp^{2\nu})}{\varepsilon(p)^\nu}\right)_{\nu\in\mathbb{N}}$, changes sign infinitely often. To this end, we shall follow \cite[Proof of Theorem 2.2]{bruinier}.

Assume, for the sake of contradiction that there exist infinitely many primes $p$ such that the sequence $\left(\frac{a(tp^{2\nu})}{\varepsilon(p)^\nu}\right)_{\nu\in\mathbb{N}}$ does not changes sign infinitely often. Let $\lambda_p$ denote the $p$-th Hecke eigenvalue of $f$. Since 
$$
T(p)\mathrm{Sh}_t(f)=\mathrm{Sh}_t(T(p^2)f),
$$
it follows that the $p$-th Hecke eigenvalue of $\mathrm{Sh}_t(f)$ is $\lambda_p$, where $T(p^2)$ is the Hecke operator on $S_{k+1/2}(N,\varepsilon)$ and $T(p)$ is the Hecke operator
on $S_{2k}(N/2,\varepsilon^2)$. By \cite[Corolary 1.8]{Shi} we have 
\begin{equation}\label{eq19}
\sum_{\nu\ge 0}a(tp^{2\nu})X^\nu=a(t)\dfrac{1-\varepsilon_{N,t}(p)p^{k-1}X}{1-\lambda_pX+\varepsilon(p)^2p^{2k-1}X^2},
\end{equation}
write 
$$1-\lambda_pX+\varepsilon(p)^2p^{2k-1}X^2=(1-\alpha_p X)(1-\beta_p X).$$ 
Replacing $X=\varepsilon(p)^{-1}p^{-s}$ ($s\in\mathbb{C}$) in \eqref{eq19} we get
\begin{equation}\label{eq20}
\sum_{\nu\ge0} a(tp^{2\nu})\varepsilon(p)^{-\nu}p^{-s\nu}=a(t)\dfrac{1-\chi_{0}(p)p^{k-1-s}}{(1-\alpha_{p}' p^{-s})(1-\beta_{p}'p^{-s})},
\end{equation}
where  $\alpha_{p}'=\alpha_p \varepsilon(p)^{-1}$ and $\beta_{p}'=\beta_p \varepsilon(p)^{-1}$. 

 Let $p$ be a prime such that $\frac{a(tp^{2\nu})}{\varepsilon(p)^\nu}\ge 0$ for all but finitely many $\nu \ge 0$. Thus, by Landau's theorem \cite[ pp. 697--699]{Landau}, the series in the left-hand side of \eqref{eq20} either (a) converges for all $s\in\mathbb{C}$ or (b) has a singularity at the real point of its line of convergence. It is clear that the alternative (a) cannot occur, since the right-hand side of \eqref{eq20} has a pole for $p^{s}=\alpha'_p$ or $p^{s}=\beta'_p.$
Thus the alternative (b) must hold, therefore $\alpha'_p$ or $\beta'_p$ must be real. On the other hand since
 $\frac{\lambda_p}{\varepsilon(p)}\in\mathbb{R}$, it follows $\alpha_p'=\overline{\beta_p'}$. Moreover, by Deligne's theorem \cite[Theorem 8.2]{Deligne}
we have $|\alpha'_p|=|\beta'_p|=p^{k-1/2}$. Consequently
$$\lambda_p=\pm 2 p^{k-1/2}\varepsilon(p),$$
hence $\mathbb{Q}(\sqrt{p})\subset K_f$, where $K_f$ denotes the field generated   over $\mathbb{Q}$ by $\lambda_p$ ($p$ runs over primes numbers) and all the values of $\varepsilon$. Therefore, by our hypothesis $K_f$ has infinitely many quadratic subfields, this is in contradiction with the fact that $K_f$ is a number field. Consequently, for all but finitely many primes $p$ and each $\phi\in [0,\pi)$ the sequence $\{\re(a(tp^{2\nu})e^{-i\phi}\}_{\nu\in\mathbb{N}}$ , changes sign infinitely often. 
\end{proof}
\begin{rem}
It seems likely that we can prove similar results to \cite[Theorem 3]{Kohnen} for the sequence $\{\re(a(tp^{2\nu})e^{-i\phi})\}_{\nu\in\mathbb{N}}$. However, we have not checked this as yet.
\end{rem}
\section{Concluding Remarks}
Let $k,N$ be natural numbers and $\varepsilon$ be a Dirichlet character modulo $N$ assume that $N$ be an odd and square-free integer. We write $S^{+}_{k+1/2}(4N,\varepsilon)$ for the Kohnen's plus space (cf. \cite{Kohnen1982}). 

Let $f$ be a cusp form of half integral weight belonging to $S^{+}_{k+1/2}(4N,\varepsilon)$, we let $a(n)$ to denote its $n$-th Fourier coefficient. Motivated by Theorem \ref{thm:3} and numerical calculations, it seems reasonable to conjecture that, for each $\phi\in [0,\pi)$ 
$$ 
\lim_{x\to\infty}\dfrac{\#\{n\le x\; :\; \im\{a(n)e^{-i\phi}\}\gtrless 0\}}{\#\{n\le x \; :\; \im\{a(n)e^{-i\phi}\} \neq 0\}}=\frac{1}{2},
$$

$$ 
\lim_{x\to\infty}\dfrac{\#\{n\le x\; :\; \re\{a(n)e^{-i\phi}\}\gtrless 0\}}{\#\{n\le x \; :\; \re\{a(n)e^{-i\phi}\} \neq 0\}}=\frac{1}{2}.
$$

It may be noted that one can get similar statements on the imaginary part in Theorem \ref{thm:1}, \ref{thm:2} and \ref{thm:3} by a rotation around $\pi/2$.

We believe that these results (Theorem \ref{thm:1}, \ref{thm:2}, \ref{thm:3} and the above conjecture) should extend to any totally real number field by the approach taken in the present paper.

\section*{Acknowledgments}
The author is greatly grateful to Francesc Fit\'e for a helpful conversation. He also wishes to thank Gabor Wiese for his valuable comments on the first draft of this work as well as Ilker Inam for providing him with some data for numerical experiments. Thanks are also due to the referee for his careful reading and their helpful comments which improve the paper.

\end{document}